\newtheorem{thm}{Theorem}
\newtheorem{lemma}{Lemma}
\newtheorem{cor}{Corollary}
\newtheorem{defn}{Definition}
\newtheorem{prop}{Proposition}
\newtheorem*{definition-non}{Definition}
\newtheorem*{theorem-non}{Theorem}
\newtheorem*{proposition-non}{Proposition}
\newtheorem*{lemma-non}{Lemma}
\newtheorem*{corollary-non}{Corollary}
\newtheorem*{conjecture-non}{Conjecture}
\newcommand{\beqa}{\begin{eqnarray}}
\newcommand{\beq}{\begin{equation}}
\newcommand{\eeqa}{\end{eqnarray}}
\newcommand{\eeq}{\end{equation}}
\newcommand{\lra}{\longrightarrow}
\newcommand{\RR}{\mathbb{R}}
\newcommand{\vep}{\varepsilon}
\newcommand\cd[2]{\nabla_{\!#1}{#2}}
\newcommand\cdf[2]{\nabla^{\scriptscriptstyle f}_{\!#1}{#2}}
\newcommand\imp{\hspace{.2in}\Rightarrow\hspace{.2in}}
\newcommand\comma{\hspace{.2in},\hspace{.2in}}
\newcommand\commas{\hspace{.1in},\hspace{.1in}}
\newcommand\commass{\hspace{.05in},\hspace{.05in}}
\newcommand{\gR}{g_{\scalebox{0.4}{$R$}}}
\newcommand\gS{g_{\scalebox{0.4}{$S$}}}
\newcommand{\Jf}{J_{\scaleto{f}{4.5pt}}}
\newcommand{\If}{I_{\scaleto{f}{4.5pt}}}
\newcommand{\gammaf}{\gamma_{\scaleto{f}{4.5pt}}}
\newcommand*{\defeq}{\mathrel{\rlap{%
                     \raisebox{0.24ex}{$\m@th\cdot$}}%
                     \raisebox{-0.24ex}{$\m@th\cdot$}}%
                     =}
\begin{document}
\title[]{The Eisenhart Lift and Hamiltonian Systems}
\author[]{Amir Babak Aazami}
\address{Clark University, Worcester, MA, USA}
\email{Aaazami@clarku.edu}

\maketitle
\begin{abstract}
It is well known in general relativity that trajectories of Hamiltonian systems lift to geodesics of pp-wave spacetimes, an example of a more general phenomenon known as the ``Eisenhart lift."  We review and expand upon the benefits of this correspondence for dynamical systems theory.  One benefit is the use of curvature and conjugate points to study the stability of Hamiltonian systems.  Another benefit is that this lift unfolds a Hamiltonian system into a family of ODEs akin to a moduli space. One such family arises from the conformal invariance of lightlike geodesics, by which any Hamiltonian system unfolds into a ``conformal class" of non-diffeomorphic ODEs with solutions in common.  By utilizing higher-index versions of pp-waves, a similar lift and conformal class are shown to exist for certain second-order complex ODEs.  Another such family occurs by lifting to a Riemannian metric that is dual to a pp-wave, a process that in certain cases yields a ``square root" for the Hamiltonian.  We prove a two-point boundary result for the family of ODEs arising from this lift, as well as the existence of a constant of the motion generalizing conservation of energy.
\end{abstract}

\section{Introduction}
Consider the Hamiltonian system
\beqa
\label{eqn:0}
\ddot{x} = -\nabla_{\!x}V,
\eeqa
where the potential $V(t,x)$ is a $C^2$ function of $t \in \RR$ and $x \in \RR^n$, and where $\nabla_{\!x}V$ is the gradient with respect to $x$.  Almost a century ago, L.~Eisenhart \cite{eisenhart} showed that \eqref{eqn:0} directly relates to the geodesics of what are now called \emph{pp-wave} spacetimes, a class of Lorentzian manifolds first discovered mathematically by H.~Brinkmann \cite{brinkmann} and later shown to model gravitational waves in general relativity; see, e.g., \cite{beem,AMS}.  In particular, solutions of an $n$-dimensional Hamiltonian system \eqref{eqn:0} lift to geodesics of an $(n+2)$-dimensional pp-wave, a relationship that stands apart from the usual symplectic formulation of \eqref{eqn:0} via the cotangent bundle. Since Eisenhart's work, other relationships have been found between classical mechanics and the geodesics of metrics; see, e.g., \cite{gordon2,ong,casetti2,bartolo,pettini} for approaches involving Riemannian, Lorentzian, and Finslerian manifolds.  The ``Eisenhart lift" (or ``Eisenhart-Duval lift") is now very well known in general relativity and high energy physics (see \cite{cariglia} for an overview), and has seen various applications and generalizations; see, e.g.,  \cite{barg,duval_gibbons,candela,flores2,minguzzi2,minguzzi3}, some of which we review below.
The key ingredient is the presence in pp-waves of a \emph{parallel lightlike} vector field; this is what enables the connection with \eqref{eqn:0}.
\vskip 6pt
In this paper we engage with the Eisenhart lift\,---\,not in the service of geometry or physics, but rather to promote its use in the theory of dynamical systems.  Of course, the interplay between dynamical systems and Lorentzian geometry is already rich: Besides the references listed above, see also \cite{sullivan,fathi,BernSuhr,monclair} for the analogy between Lyapunov functions and time functions, and \cite{benenti,gibbons_s,cariglia2} on using conserved quantities of a dynamical system to identify spacetimes with higher-rank Killing tensors.  Our intention here is to make a small contribution to this vast literature, focusing, like \cite{casetti2}, on the ways in which the geometry of pp-waves can be used to gain insight into \eqref{eqn:0} itself.  The Lorentzian geometry we call upon is completely classical\,---\,e.g., the conformal properties of lightlike geodesics and their conjugate points (see \cite{beem,minguzzi}), or how Ricci curvature yields the focusing of geodesics \`a la the Penrose-Hawking singularity theorems \cite{beem,o1983}\,---\,but as we show below, we also go beyond Lorentzian geometry.  In particular, we are motivated by, and provide partial answers to, the following two questions:
\vskip 5pt
i.~\emph{Given a solution of \eqref{eqn:0}, to what other metrizable but non-diffeomoprhic ODEs will it lift as a solution?   What about variants of \eqref{eqn:0}?}
\vskip 4pt
By lifting solutions $x(t)$ of \eqref{eqn:0} to \emph{lightlike} geodesics of a pp-wave, one can exploit the conformal invariance of the latter.  This fact is well known and has seen applications and generalizations; see, e.g., \cite{duval_gibbons, minguzzi2}.  We provide a brief review of this in Section \ref{sec:pp}, our goal being to highlight an important consequence for \eqref{eqn:0}: Lightlike conformal invariance uncovers a non-diffeomorphic ``conformal class" of ODEs to which $x(t)$ is, after a reparametrization, also a solution, one, in fact, with the same accumulation points, in the sense of ii.~below.  This is known, and has been utilized in settings more general than ours; see, e.g., \cite[Theorem~3.2,~eqn.~(34)ff.]{minguzzi2}.  To close our review, and to show what the conformal family of ODEs for \eqref{eqn:0} looks like (as we cannot find them in the literature), we present them explicitly in Theorem \ref{thm:1d}.  We then proceed in two new directions.  First, we show in Section \ref{sec:comp} that the Eisenhart lift and its ``conformal class" of ODEs also extend to the \emph{complex} plane (Theorem \ref{thm:main2}).  In particular, solutions of $\ddot{z} = F(z(t))$, where $F$ is holomorphic and $z(t) = x(t)+iy(t)$, lift directly to lightlike geodesics of split-signature pp-waves (i.e., those of signature $(\!--++)$); thus $\ddot{z} = F(z(t))$, too, has a conformal class of ODEs.  Second, we show in Section \ref{sec:Riem} that solutions of $\ddot{x} = -\nabla_{\!x}(V^2)$, with $V(x)$ time-independent, also lift to geodesics of a complete \emph{Riemannian} metric that is ``dual" to a pp-wave (Theorem \ref{prop:Kahler}), a lift that can be viewed as ``taking the square root" of $\ddot{x} = -\nabla_{\!x}(V^2)$.  For potentials $V(t,x)$ in general, this ``Riemannian Eisenhart lift" generalizes \eqref{eqn:0} to a family of ODEs for which we prove, in Theorem \ref{thm:Kahler}, a two-point boundary result as well as the existence of a constant of the motion generalizing conservation of energy:
\begin{theorem-non}[Generalized Hamiltonian system]
Let $V(t,x)$ be a $C^2$ function globally defined on $\RR\times \RR^n$.  For any two points $x_0,x_1 \in \RR^n$, there exist a constant $c$ and a $C^2$ function $\tau(t)$, both depending on $x_0,x_1$, such that the second-order ODE
\beqa
\label{eqn:tV0}
\ddot{x} = -2\Big(c+\!\!\int_0^t(\nabla_{\!x}\widetilde{V}\cdot \dot{x})\,dt\Big)\nabla_{\!x}\widetilde{V} \comma \widetilde{V}(t,x) \defeq V(\tau(t),x),
\eeqa
has a complete solution $x(t)$ passing through $x_0$ and $x_1$, which satisfies
\beqa
\label{eqn:CoE0}
\frac{1}{2}|\dot{x}(t)|^2 + \frac{1}{4}\dot{\tau}(t)^2 = \emph{\text{const.}}
\eeqa
If $V_t = 0$, then \eqref{eqn:tV0} reduces to the Hamiltonian system
$$
\ddot{x} = -\nabla_{\!x}(V+\bar{c})^2 \comma \bar{c}\defeq c-V(x_0),
$$
and \eqref{eqn:CoE0} the conservation of energy equation for the  potential $(V+\bar{c})^2$.
\end{theorem-non}
Thus the Eisenhart Lift, together with its complex and Riemannian variants, can be viewed as defining various ``moduli spaces of ODEs" for \eqref{eqn:0}.
\vskip 5pt
ii.~\emph{Regarding the stability of \eqref{eqn:0}, what conditions on the potential $V(t,x)$ will guarantee the accumulation of nearby trajectories?}
\vskip 4pt
In differential geometry, conjugate points along geodesics indicate when nearby geodesics ``accumulate," in a sense made precise by the concept of a \emph{Jacobi field} along a geodesic.  For pp-waves (and certain generalizations of them), these were studied in \cite{flores2} and some conditions for their presence/absence were given in \cite[Proposition~6.4, Remark~6.5]{flores2}, in terms of the convexity of the Hessian of $V$ in \eqref{eqn:0} and the sign of the pp-wave's sectional curvature.  In our Theorems \ref{thm:conj_1} and \ref{thm:last_conj}, we explore further conditions for their existence via the machinery of the classical Penrose-Hawking singularity theorems (see \cite{beem,o1983}); e.g., in Theorem \ref{thm:last_conj} we prove:
\begin{theorem-non}
Let $V(t,x)$ be a $C^2$ function and $x(t)$ a maximal, nonconstant solution of $\ddot{x} = -\nabla_{\!x}V$ satisfying the following properties:
\begin{enumerate}[leftmargin=.4in]
\item[i.] There is a time $t_0$ at which $\dot{x}^i(t_0) = 0$ for some $i=1,\dots,n$,
\item[ii.] $V_{ij}\big|_{(t_0,x(t_0))} \neq 0$ for some $j=1,\dots,n$,
\item[iii.] $\Delta_x V\big|_{(t,x(t))} \geq 0$.
\end{enumerate}
If $x(t)$ is defined for all time, then there is a one-parameter family of distinct solutions of $\ddot{x} = -\nabla_{\!x}V$ starting at some point $x(t_1)$ and accumulating at some later point $x(t_2)$.
\end{theorem-non}
\noindent The notion of ``accumulation point" specific to \eqref{eqn:0} is made precise in Definition \ref{def:conj2}. (See also \cite{casetti2} for further applications of Jacobi fields to \eqref{eqn:0}, particularly to Lyapunov exponents and chaos.)
\vskip 5pt
The outline of this paper is as follows: We provide a detailed review of the Eisenhart lift and the geometry of pp-waves in Section \ref{sec:pp}\,---\,note that everything presented therein is known.  Sections \ref{sec:conjugate}-\ref{sec:Riem} then apply the Eisenhart lift and its generalizations to show Theorems \ref{thm:conj_1}-\ref{thm:Kahler} above.

\section{Review of Hamiltonian systems, pp-waves, and the Eisenhart lift}
\label{sec:pp}
First, recall that a \emph{Lorentzian manifold} $(M,g)$ is a smooth manifold $M$ on which is defined a smooth, nondegenerate metric $g$ of signature $(-++\cdots+)$.  Because of the absence of positive-definiteness, any nonzero tangent vector $X \in TM$ comes in three flavors:
\beqa
\text{$X$ is}~\left\{
\begin{array}{rcl}
\text{``timelike"} & \text{if} & g(X,X) < 0,\\
\text{``spacelike"} & \text{if} & g(X,X) > 0,\\
\text{``lightlike"} & \text{if} & g(X,X) = 0.\nonumber
\end{array}\right.
\eeqa
(A tangent vector's designation as timelike, spacelike, or lightlike is known as its \emph{causal character}; in general, metrics of signature $(--\cdots-++\cdots+)$ are called \emph{semi-Riemannian}.)
The Lorentzian manifolds we have in mind are \emph{pp-waves}, which have a distinguished history.  They are the simplest examples of Lorentzian manifolds endowed with a parallel (i.e., covariantly constant) lightlike vector field, a class of metrics that was first introduced mathematically in \cite{brinkmann}.  From a physical perspective, pp-waves model radiation propagating at the velocity of light in general relativity; see, e.g., \cite[Chapter 13]{beem}, \cite{flores2}, and \cite{AMS}. The definition of pp-wave that we give here, taken from \cite{globke}, is a coordinate-independent version of the standard definition appearing in the physics literature:

\begin{defn}[pp-wave]
\label{def:pp00}
On a \emph{(}compact or noncompact\emph{)} manifold $M$, a Lorentzian metric $g$ is a \emph{pp-wave} if it admits a globally defined lightlike vector field $N$ that is parallel, $\nabla N = 0$ \emph{(}$\nabla$ is the Levi-Civita connection of $g$\emph{)}, and if its curvature endomorphism $R_{\scalebox{0.6}{$g$}}$ satisfies
\beqa
\label{def:ppwave}
R_{\scalebox{0.6}{$g$}}(X,Y)\,\cdot = 0~~~\text{for all $X,Y \in \Gamma(N^{\perp})$.}
\eeqa
\end{defn}

Locally, pp-waves always take the following form, now known as ``Brinkmann coordinates" (for our purposes, it suffices that $V$ be at least $C^2$):

\begin{prop}[Brinkmann coordinates]
\label{prop:Brink}
Let $(M,g)$ be an $(n+2)$-dimensional pp-wave with parallel lightlike vector field $N$.  Then there exist local coordinates $(v,t,x^1,\dots,x^n)$ in which $N = \partial_v$ and
\beqa
\label{eqn:metric}
g \defeq 2dvdt  -2V(t,x^1,\dots,x^n)(dt)^2+ \sum_{i=1}^n (dx^i)^2,
\eeqa
for some smooth function $V(t,x^1,\dots,x^n)$ independent of $v$.
If \eqref{eqn:metric} exists globally on $\RR^{n+2}$, then $(\RR^{n+2},g)$ is called a \emph{standard pp-wave}. 
\end{prop}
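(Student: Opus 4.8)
The plan is to build the chart $(v,t,x^1,\dots,x^n)$ one coordinate at a time, reading $t$ and $v$ directly off the parallel lightlike field $N$ and then using the curvature hypothesis \eqref{def:ppwave} only at the very end to normalize the transverse directions. First I would extract the coordinate $t$. Since $\nabla N = 0$, the dual one-form $N^\flat := g(N,\cdot)$ is parallel, hence closed, so the Poincar\'e lemma gives locally a function $t$ with $N^\flat = dt$. Because $N$ is lightlike, $N(t) = dt(N) = g(N,N) = 0$, so $t$ is constant along the flow of $N$; and because $N$ is nowhere zero, $dt$ never vanishes, so $t$ has no critical points and $N^\perp = \ker dt$ is an integrable hyperplane distribution whose leaves are the level sets $\{t = \mathrm{const}\}$, to which $N$ is tangent. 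I also record that $\nabla N = 0$ makes $N$ Killing, $\mathcal{L}_N g = 0$, and that $N^\perp$ is a parallel subbundle (if $g(Y,N)=0$ then $g(\nabla_X Y, N) = -g(Y,\nabla_X N) = 0$).

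Next I would produce $v$: since $N$ is nowhere zero, the flow-box theorem gives a coordinate $v$ with $N = \partial_v$, and since $N(t)=0$ the function $t$ is independent of $v$; as $dt\neq 0$, I can complete $(v,t)$ to a chart $(v,t,x^1,\dots,x^n)$ with $N = \partial_v$. In \emph{any} such chart the components $g_{v\mu} = g(N,\partial_\mu) = dt(\partial_\mu) = \delta^t_\mu$ are forced, so $g_{vv}=0$, $g_{vt}=1$, $g_{vi}=0$ automatically, while $\mathcal{L}_N g = 0$ reads $\partial_v g_{\mu\nu}=0$ (indeed a short computation shows $\nabla N=0$ is \emph{equivalent}, in this chart, to $\partial_v g_{\mu\nu}=0$). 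Thus the metric already takes the Walker form
\[
g = 2\,dv\,dt + H(t,x)\,dt^2 + 2W_i(t,x)\,dt\,dx^i + h_{ij}(t,x)\,dx^i dx^j,
\]
with all coefficients independent of $v$, and it remains only to arrange $W_i\equiv 0$ and $h_{ij}\equiv \delta_{ij}$; this is where \eqref{def:ppwave} enters and is the crux of the argument. Because $N$ is parallel, $R_g(X,Y)N = 0$ and (by the pair symmetry of the curvature tensor) $R_g(N,Y)=0$ for all $X,Y$, so the only content of \eqref{def:ppwave} beyond $\nabla N=0$ is the vanishing of the purely transverse endomorphisms $R_g(\partial_{x^i},\partial_{x^j})=0$. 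A direct computation, using $\Gamma^t_{\mu\nu}=0$ and $\Gamma^i_{jk}=\widetilde\Gamma^i_{jk}$ (the Christoffel symbols of $h$), identifies the transverse block of $R_g(\partial_{x^j},\partial_{x^l})\partial_{x^k}$ with the intrinsic curvature of $h(t,\cdot)$, so for each fixed $t$ the Riemannian metric $h_{ij}(t,\cdot)$ is flat, while the remaining components of \eqref{def:ppwave} constrain $W_i$ and $\partial_t h_{ij}$.

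To finish I would choose, smoothly in $t$, Euclidean coordinates on each flat leaf (e.g.\ via a $t$-dependent parallel orthonormal frame of the screen bundle $N^\perp/\langle N\rangle$), reducing $h_{ij}$ to $\delta_{ij}$, and then absorb the resulting $dt\,dx^i$ cross term by a shift $v\mapsto v+\psi(t,x)$, which replaces $W_i$ by $W_i-\partial_i\psi$ and $H$ by $H-2\partial_t\psi$. Setting $V := -H/2$ then yields \eqref{eqn:metric}. The hard part will be precisely this final normalization: one must flatten $h_{ij}$ and eliminate $W_i$ \emph{simultaneously} while preserving the $2\,dv\,dt$ pairing and the $v$-independence already secured. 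The care lies in showing that the needed change of variables $x^i = x^i(t,y)$, $v = \tilde v + \psi(t,y)$ exists on the chart, and that the curvature hypothesis is exactly strong enough to make the transverse one-form $W_i\,dx^i$ closed in $x$ (so a local potential $\psi$ with $\partial_i\psi=W_i$ exists) and $h(t,\cdot)$ flat for every $t$; the fact that $\partial_t\notin N^\perp$, so \eqref{def:ppwave} says nothing about curvature in the $t$-direction, is what correctly leaves $V$ with its full $t$-dependence in the end.
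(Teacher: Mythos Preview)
The paper does not actually prove this proposition: its ``proof'' consists of a reference to \cite{globke}, together with the parenthetical remark that by \cite{Leistner} a compact pp-wave has universal cover globally isometric to a standard one. Your outline is the standard construction of Brinkmann coordinates and is essentially correct, so relative to the paper you are supplying strictly more.

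One point in your final normalization deserves sharpening. After reaching the Walker form and flattening $h_{ij}$ to $\delta_{ij}$, the hypothesis \eqref{def:ppwave} does \emph{not} by itself force the transverse one-form $W_i\,dx^i$ to be closed in $x$; it only forces $F_{ij}\defeq\partial_iW_j-\partial_jW_i$ to be independent of $x$, i.e.\ $F_{ij}=F_{ij}(t)$. A concrete counterexample in dimension four: take $h=\delta$, $H=0$, and $W=\tfrac12(-x^2,x^1)$; one checks directly that $\nabla_i\partial_j=0$ and $R_g(\partial_1,\partial_2)\partial_t=0$, so \eqref{def:ppwave} holds, yet $F_{12}=1$. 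Thus a bare shift $v\mapsto v+\psi(t,x)$ is insufficient. What one must do is first spend the residual $t$-dependent $O(n)$ freedom in the choice of Euclidean coordinates on each leaf\,---\,solve $\dot R=\tfrac12 F(t)R$ and set $x=R(t)y$\,---\,after which the new $F'$ vanishes and the remaining $W'_i$ is closed, so the $v$-shift finishes the job. Your parenthetical about using a ``$t$-dependent \emph{parallel} orthonormal frame of the screen bundle $N^\perp/\langle N\rangle$'' is exactly the right mechanism here (the pp-wave condition makes the induced screen connection flat, so such a frame exists and encodes precisely this rotation), but the later sentence asserting that \eqref{def:ppwave} ``makes $W_i\,dx^i$ closed in $x$'' should be understood as holding \emph{in the coordinates adapted to that parallel frame}, not in an arbitrary flattening.
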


\begin{proof}
For a proof of this fact, consult \cite{globke}.  (We also mention here that, as proved in \cite{Leistner}, if a pp-wave is compact, then its universal cover is globally isometric to a standard pp-wave.)  
\end{proof}

The relationship between pp-waves and Hamiltonian systems is due to the geodesics of \eqref{eqn:metric}; in what follows, we will always regard our pp-waves as $(n+2)$-dimensional and our Hamiltonian systems as $n$-dimensional.

\begin{prop}[Geodesics of a pp-wave]
\label{prop:geod}
For a pp-wave expressed locally in the Brinkmann coordinates $(v,t,x^1,\dots,x^n)$ of Proposition \ref{prop:Brink}, its geodesic equations of motion are
\beqa
\label{eqn:geod*}
\left.\begin{array}{lcl}
\ddot{v} \!\!\!&=&\!\!\! \Big(2\frac{dV}{ds}-V_t\,\dot{t}\Big)\dot{t},\\
\ddot{t} \!\!\!&=&\!\!\! 0,\phantom{\Big(\Big)}\\
\ddot{x}^i \!\!\!&=&\!\!\! -V_i\,\dot{t}^2 \commas i = 1,\dots,n,\phantom{\Big(\Big)}
\end{array}\right\}
\eeqa
where $V_t \defeq \frac{\partial V}{\partial t}, V_i \defeq \frac{\partial V}{\partial x^i}$, and $s$ is the affine parameter.
\end{prop}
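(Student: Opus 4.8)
The plan is to derive the equations of motion \eqref{eqn:geod*} by a direct computation, extremizing the energy Lagrangian of \eqref{eqn:metric} rather than writing out Christoffel symbols. Set $L\defeq\tfrac12 g_{\mu\nu}\dot x^\mu\dot x^\nu$, so that in the Brinkmann coordinates $(v,t,x^1,\dots,x^n)$ one has $2L = 2\dot v\dot t - 2V\dot t^2 + \sum_{i=1}^n(\dot x^i)^2$; its Euler--Lagrange equations are precisely the affinely parametrized geodesics of $g$. Because $V$ is independent of $v$, so is $L$, and hence the conjugate momentum $\partial L/\partial\dot v = \dot t$ is conserved along geodesics, giving $\ddot t = 0$\,---\,the second line of \eqref{eqn:geod*}. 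This identity is the structurally important one: it says $\dot t$ is constant, so that $t$ serves (up to that constant) as an affine parameter, which is exactly what makes the Eisenhart lift work.

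For the remaining coordinates the computation is just as short. From $\partial L/\partial x^i = -V_i\dot t^2$ and $\partial L/\partial\dot x^i = \dot x^i$ one gets $\ddot x^i = -V_i\dot t^2$. From $\partial L/\partial t = -V_t\dot t^2$ and $\partial L/\partial\dot t = \dot v - 2V\dot t$ one gets $\tfrac{d}{ds}(\dot v - 2V\dot t) = -V_t\dot t^2$; expanding the left-hand side and inserting $\ddot t = 0$ yields $\ddot v - 2\tfrac{dV}{ds}\dot t = -V_t\dot t^2$, where $\tfrac{dV}{ds} = V_t\dot t + \sum_i V_i\dot x^i$ is the derivative of $V$ along the curve. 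Rearranging gives $\ddot v = \big(2\tfrac{dV}{ds} - V_t\dot t\big)\dot t$, which is the first line of \eqref{eqn:geod*}.

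I do not expect a genuine obstacle; either this route or the direct Christoffel-symbol computation (whose only nonzero symbols in these coordinates are $\Gamma^v_{tt} = -V_t$, $\Gamma^v_{ti} = \Gamma^v_{it} = -V_i$, and $\Gamma^i_{tt} = V_i$, since $V$ is the only nonconstant ingredient of the metric) settles the proposition in a few lines. The only care needed is bookkeeping: remembering that the $\tfrac{dV}{ds}$ appearing in the first equation is a total derivative along the geodesic and not the partial derivative $V_t$, and, if one goes via the Lagrangian, invoking the standard fact that for a semi-Riemannian metric the critical points of the energy functional coincide with the $\nabla$-geodesics. In the write-up I would present the Lagrangian derivation in the proof itself and record the Christoffel symbols in passing, since they are needed later for the curvature and Jacobi-field computations.
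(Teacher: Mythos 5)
Your proposal is correct, and your verification of the three equations checks out (including the identity $\big(2\tfrac{dV}{ds}-V_t\dot t\big)\dot t = V_t\dot t^2 + 2\sum_i V_i\dot x^i\dot t$ needed to match the $\ddot v$-equation). The paper takes the other of the two routes you mention: it simply records the nonvanishing Christoffel symbols $\Gamma^v_{it}=-V_i$, $\Gamma^v_{tt}=-V_t$, $\Gamma^i_{tt}=V_i$ (exactly the ones you list) and reads off \eqref{eqn:geod*} from $\ddot z^\alpha + \Gamma^\alpha_{\beta\gamma}\dot z^\beta\dot z^\gamma=0$. The two derivations are of course equivalent; your Lagrangian version has the small conceptual advantage that $\ddot t=0$ appears immediately as the conservation law for the cyclic coordinate $v$ (i.e.\ as the constant of motion $g(\gamma',\partial_v)$ associated to the parallel field $N=\partial_v$), which is indeed the structurally important fact behind the Eisenhart lift. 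The paper's choice has the practical advantage you anticipate: the explicit Christoffel symbols \eqref{eqn:Christ} are reused later for the curvature and Jacobi-field computations (Lemma \ref{prop:check}, Theorems \ref{thm:conj_1} and \ref{thm:last_conj}), so recording them in the proof is not incidental. Either write-up is fine; if you use the Lagrangian route, do state (as you do) that critical points of the energy functional are exactly the affinely parametrized geodesics, since that is the only nontrivial input.
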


\begin{proof}
The only nonvanishing Christoffel symbols of \eqref{eqn:metric} are
\beqa
\label{eqn:Christ}
\Gamma_{it}^v = -V_{i} \comma \Gamma_{tt}^v = -V_t \comma \Gamma_{tt}^i = V_i,
\eeqa
from which the equations \eqref{eqn:geod*} are easily derived from the geodesic equations $\ddot{z}^{\alpha} + \sum_{\beta,\gamma}\Gamma^{\alpha}_{\beta\gamma}\dot{z}^\beta\dot{z}^\gamma = 0$.
\end{proof}

To see the relationship between the $(n+2)$-dimensional system \eqref{eqn:geod*} and the $n$-dimensional Hamiltonian system
\beqa
\label{eqn:0*}
\ddot{x} = -\nabla_{\!x}V \comma  x = (x^1,\dots,x^n) \in \RR^{n},
\eeqa
begin by observing that for any solution $\gamma(s) = (v(s),t(s),x^i(s))$ of \eqref{eqn:geod*} with affine parameter $s$, we must have $t(s) = as+b$ for some $a,b \in \RR$.  It follows that for $t$-initial data $t(0) = 0, \dot{t}(0) = 1$, the $\ddot{x}^i$-equations in \eqref{eqn:geod*} comprise precisely the Hamiltonian system \eqref{eqn:0*}, with $t$ equal to the affine parameter $s$. Moreover, as $v,\dot{v}$ make no appearance in any equation in \eqref{eqn:geod*}, the $\ddot{v}$-equation is completely determined by \eqref{eqn:0*}.  One therefore says that the solutions of \eqref{eqn:0*} ``lift" to the geodesics \eqref{eqn:geod*} of the $(n+2)$-dimensional pp-wave metric \eqref{eqn:metric}.  Let us formalize this now:

\begin{defn}[Hamiltonian/pp-wave correspondence]
\label{def:lift}
Let $\ddot{x} = -\nabla_{\!x}V$ be an $n$-dimensional Hamiltonian system with $C^2$ function $V(t,x)$ defined on an open connected set $I \times \mathcal{U} \subseteq \RR\times \RR^n$ with $t \in I$ and $x = (x^1,\dots,x^n) \in \mathcal{U}$.  Then the $(n+2)$-dimensional pp-wave metric
\beqa
\label{def:corr}
g \defeq 2dvdt  -2V(t,x)(dt)^2+ \sum_{i=1}^n (dx^i)^2,
\eeqa
defined on $(v,t,x) \in \RR\times I \times \mathcal{U} \subseteq \RR^{n+2}$, is the \emph{pp-wave lift of $V(t,x)$}.  Given any maximal solution $x(t)$ of $\ddot{x} = -\nabla_{\!x}V$, the geodesics of $g$ of the form $\gamma(t) = (v(t),t,x(t))$, with the same domain as $x(t)$, are the \emph{geodesic lifts of $x(t)$}.
\end{defn}

Note that because $\gamma$'s affine parameter $s = t$, its domain is indeed equal to $x(t)$'s.  (This is always possible by a linear rescaling unless $\dot{t}(s) = 0$, in which case $\gamma(s)$ is a straight line and there is no input from $\ddot{x} = -\nabla_{\!x}V$; we will therefore ignore this case henceforth.) We can easily obtain a one-to-one lift by fixing the starting point and the causal character of the geodesic\,---\,either spacelike, timelike, or lightlike\,---\,as follows (see \cite{eisenhart}, \cite[Proposition~3.1]{candela}, and \cite[Theorem~3.3]{minguzzi2} for generalizations):

\begin{prop}[Eisenhart lift]
\label{prop:3}
Let $V(t,x)$ be a $C^2$ function defined on an open connected set $I \times \mathcal{U} \subseteq \RR\times \RR^n$ and $(\RR\times I \times \mathcal{U},g)$ its pp-wave lift.  Then $x(t)$ is the maximal solution of $\ddot{x} = -\nabla_{\!x}V$ with initial data $(x_0,\dot{x}_0)$ at time $t_0$ if and only if $\gamma(t) = (v(t),t,x(t))$, with $v(t)$ satisfying $\ddot{v} = 2\frac{dV(t,x(t))}{dt}-\frac{\partial V}{\partial t}\big|_{(t,x(t))}$ and with initial data
\beqa
\label{eqn:conf0}
\gamma(t_0) \defeq (0,t_0,x_0) \commas \gamma'(t_0) \defeq \Big(\!\!-\!\frac{1}{2}\dot{x}_0^2 + V(t_0,x_0) + \vep_{\scalebox{0.6}{$\gamma$}},1,\dot{x}_0\Big),
\eeqa
is the geodesic lift of $x(t)$.  The constant $\vep_{\scalebox{0.6}{$\gamma$}}$ equals $0, -\frac{1}{2},\frac{1}{2}$ depending on whether $\gamma(t)$ is, respectively, lightlike, unit timelike, or unit spacelike.
\end{prop}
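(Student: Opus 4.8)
The plan is to read everything off Proposition~\ref{prop:geod}, which gives the geodesic equations \eqref{eqn:geod*} of the pp-wave lift, together with the elementary fact that $g(\gamma',\gamma')$ is constant along a geodesic. I would begin with the observation, made in the discussion following Proposition~\ref{prop:geod}, that such a geodesic has $\ddot t=0$, so $t$ is an affine function of the affine parameter $s$; since the degenerate case $\dot t\equiv 0$ has been excluded, an affine reparametrization lets me take $s=t$, whence $\dot t\equiv 1$ and $\ddot t\equiv 0$. Substituting $\dot t\equiv 1$ into \eqref{eqn:geod*} and using the chain rule $\tfrac{dV(t,x(t))}{dt}=V_t+\nabla_{\!x}V\cdot\dot x$, the $x^i$-equations become exactly $\ddot x=-\nabla_{\!x}V$ while the $v$-equation becomes the one in the statement. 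This already yields the equivalence at the level of the equations of motion: $\gamma(t)=(v(t),t,x(t))$ is a geodesic of $g$ if and only if $x(t)$ solves $\ddot x=-\nabla_{\!x}V$ and $v(t)$ solves the displayed $v$-ODE, and reading this in both directions supplies the two halves of the biconditional.

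Next I would match the data in \eqref{eqn:conf0}. The conditions $\gamma(t_0)=(0,t_0,x_0)$ and $\gamma'(t_0)=(\,\cdot\,,1,\dot x_0)$ merely record $x(t_0)=x_0$, $\dot x(t_0)=\dot x_0$, $t(t_0)=t_0$, $\dot t(t_0)=1$, together with the normalization $v(t_0)=0$, which is harmless since $v$ never appears on the right-hand side of \eqref{eqn:geod*}; the sole remaining constant of integration is $\dot v(t_0)$. To pin down the causal character I would use $\nabla_{\gamma'}\gamma'=0$, which gives $\tfrac{d}{dt}g(\gamma',\gamma')=2g(\nabla_{\gamma'}\gamma',\gamma')=0$, so $g(\gamma',\gamma')$ is constant along $\gamma$ and can be evaluated at $t_0$: from \eqref{def:corr},
\[
g(\gamma'(t_0),\gamma'(t_0))=2\dot v(t_0)-2V(t_0,x_0)+\dot x_0^2,
\]
and inserting $\dot v(t_0)=-\tfrac12\dot x_0^2+V(t_0,x_0)+\vep_\gamma$ collapses the right side to $2\vep_\gamma$. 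Hence $\gamma$ is lightlike, unit timelike, or unit spacelike precisely when $\vep_\gamma=0$, $-\tfrac12$, or $\tfrac12$, respectively.

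Finally I would verify that this is a correspondence between \emph{maximal} solutions $x(t)$ and \emph{geodesic lifts} in the sense of Definition~\ref{def:lift}, i.e.\ that $\gamma$ and $x$ have the same inextendible domain. Because $V\in C^2$ forces $x(t)\in C^2$, the right-hand side $2\tfrac{dV(t,x(t))}{dt}-V_t$ of the $v$-ODE is continuous on all of the domain of $x(t)$, so its solution $v(t)$, and with it $\gamma(t)$, extends to exactly that domain, while $\gamma$ plainly cannot be defined where $x$ is not; thus $\gamma$ is inextendible if and only if $x$ is, and both implications of the proposition follow. I do not expect a genuine obstacle here: the one step requiring care is this last one---checking that solving the auxiliary $v$-equation never shrinks the interval of existence and that maximality transfers both ways---everything else being bookkeeping layered on Proposition~\ref{prop:geod}.
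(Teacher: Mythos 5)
Your proposal is correct and follows essentially the same route as the paper's proof: read the equations off Proposition~\ref{prop:geod} with $\dot t\equiv 1$, and determine $\dot v_0$ by solving $g(\gamma'(t_0),\gamma'(t_0))=2\vep_{\scalebox{0.6}{$\gamma$}}$. You simply spell out two points the paper states as "clear"---the chain-rule reduction of the $\ddot v$-equation and the transfer of maximality via the integrability of the auxiliary $v$-ODE---both of which check out.
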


\begin{proof}
That the given $\gamma(t) = (v(t),t,x(t))$ solves \eqref{eqn:geod*} is clear; it is maximal if and only if $x(t)$ is maximal. In general, any such geodesic has initial velocity $\gamma'(t_0) = (\dot{v}_0,1,\dot{x}_0)$, but if we stipulate that $\gamma(t)$ is, say, lightlike, then
$$
g(\gamma'(t_0),\gamma'(t_0)) = 0 \imp 2\dot{v}_0 - 2V(t_0,x_0) + \sum_{i=1}^n(\dot{x}_0^i)^2 = 0.
$$
Since $\dot{x}_0^2 = \sum_{i=1}^n(\dot{x}_0^i)^2 $, solving for $\dot{v}_0$ yields precisely \eqref{eqn:conf0} with $\vep_{\scalebox{0.6}{$\gamma$}} = 0$.  The unit-timelike and -spacelike cases ($g(\gamma'(t_0),\gamma'(t_0)) = \mp1$) are similar.
\end{proof}

Fixing $\vep_{\scalebox{0.6}{$\gamma$}}$ thus gives a one-to-one lift.  The choice of $v_0 = 0$ in \eqref{eqn:conf0} is arbitrary and will play no role in what follows.  We close this section by reviewing a particularly important aspect of the Eisenhart Lift, namely, that it allows one to exploit the \emph{conformal invariance of lightlike geodesics} (see, e.g., \cite{Sanchez,minguzzi}).  To that end, let us recall the following facts regarding the conformal geometry of semi-Riemannian metrics.  Given a semi-Riemannian manifold $(M,g)$ with Levi-Civita connection $\nabla$, and any metric $e^{2f}g$ in its conformal class (with $f$ a $C^2$ function on $M$), an application of the Koszul formula shows that the Levi-Civita connection $\cdf{}{}$ of $e^{2f}g$ is related to $\nabla$ as follows: For any vector fields $X,Y \in \mathfrak{X}(M)$,
$$
\cdf{X}{Y} = \cd{X}{Y} + X(f)Y + Y(f)X - g(X,Y)\text{grad}_{\scalebox{0.6}{$g$}} f,
$$
where $\text{grad}_{\scalebox{0.6}{$g$}} f$ is the gradient of $f$ with respect to $g$ (in any choice of local coordinates, $\text{grad}_{\scalebox{0.6}{$g$}} f = g^{ij}f_i \partial_j$).  The key difference between the conformal properties of Riemannian vs. semi-Riemannian metrics is that the gradient term vanishes when $X = Y = N$ is a \emph{lightlike} vector field:
$$
g(N,N) = 0 \imp  \cdf{N}{N} = \cd{N}{N} + 2N(f)N.
$$
In particular, if $N$ has $g$-geodesic flow, $\cd{N}{N} = 0$, then $N$ will be \emph{pre}-geodesic with respect to $\cdf{}{}$; i.e., $\cdf{N}{N}$ will be proportional to $N$:
$$
g(N,N) = 0\ \ \ \text{and}\ \ \ \cd{N}{N} = 0 \imp  \cdf{N}{N} = 2N(f)N.
$$
Of course, this also holds for the covariant derivative along a curve $\gamma(t)$:
\beqa
\label{eqn:preg}
g(\gamma',\gamma') = 0\ \ \ \text{and}\ \ \ \cd{\gamma'}{\gamma'} = 0 \imp  \cdf{\gamma'}{\gamma'} = 2\frac{d(f \circ \gamma)}{dt} \gamma'.
\eeqa
The key property of pre-geodesics is that they can be rendered into geodesics by a reparametrization; this fact is well known, though we present its proof here for convenience, since we will call on this result in Theorem \ref{thm:1d} below:

\begin{prop}[Conformal invariance of lightlike geodesics]
\label{prop:clg}
Let $I \subseteq \RR$ be a connected interval and $\gamma\colon I \lra M$ a lightlike geodesic of a semi-Riemannian manifold $(M,g)$.  For any $t_0 \in I$ and any $C^2$ function $f$ on $M$, define the connected interval $\If \defeq \{\int_{t_0}^t  e^{2f(\gamma(r))}dr\,:\,t \in I\}$.  Then the $C^2$ curve $\gammaf\colon \If \lra M$ defined by
\beqa
\label{eqn:gamma1}
\gammaf(s) \defeq \gamma(\tau^{-1}(s)) \comma s = \tau(t) \defeq \int_{t_0}^t  e^{2f(\gamma(r))}dr
\eeqa
is a lightl$C^2$odesic of the conformal metric $e^{2f}g$.
\end{prop}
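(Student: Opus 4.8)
The plan is to verify the two claims about $\gammaf$ separately: that it is lightlike for $e^{2f}g$, and that it is an affinely parametrized geodesic of $e^{2f}g$. The first is immediate from the way velocities rescale under a reparametrization; the second I would obtain by substituting the pre‑geodesic identity \eqref{eqn:preg} into the standard transformation law for the covariant derivative along a curve under a change of parameter, and then checking that the particular $\tau$ in \eqref{eqn:gamma1} is exactly the reparametrization that kills the leftover term proportional to $\gamma'$.

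First I would record the elementary facts about $\tau$. Since $f$ and $\gamma$ are at least $C^2$, the function $\tau(t) = \int_{t_0}^t e^{2f(\gamma(r))}\,dr$ is $C^2$ with $\tau'(t) = e^{2f(\gamma(t))} > 0$; hence $\tau\colon I \to \If$ is a strictly increasing bijection onto a connected interval, with inverse $h \defeq \tau^{-1}$ of class $C^2$ satisfying $h'(s) = e^{-2f(\gamma(h(s)))}$, and $\gammaf = \gamma\circ h$ is $C^2$. Differentiating gives $\gammaf'(s) = h'(s)\,\gamma'(h(s)) = e^{-2f(\gamma(t))}\,\gamma'(t)$ with $t = h(s)$, so
$$
(e^{2f}g)(\gammaf',\gammaf')\big|_s = e^{2f(\gamma(t))}\,e^{-4f(\gamma(t))}\,g(\gamma',\gamma')\big|_t = 0,
$$
since $\gamma$ is $g$-lightlike; hence $\gammaf$ is $(e^{2f}g)$-lightlike.

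For the geodesic assertion, write $\cdf{}{}$ for the Levi-Civita connection of $e^{2f}g$ and $\frac{D}{ds}$ for the covariant derivative it induces along curves. The chain rule for covariant differentiation applied to $\gammaf = \gamma\circ h$ yields
$$
\frac{D\gammaf'}{ds}(s) = h''(s)\,\gamma'(h(s)) + h'(s)^2\,\big(\cdf{\gamma'}{\gamma'}\big)\big|_{h(s)},
$$
and \eqref{eqn:preg} (valid because $\gamma$ is a $g$-lightlike $g$-geodesic) replaces the last term by $2h'(s)^2\,\frac{d(f\circ\gamma)}{dt}\big|_{h(s)}\,\gamma'(h(s))$. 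Differentiating $h'(s) = e^{-2f(\gamma(h(s)))}$ for our $h = \tau^{-1}$ gives $h''(s) = -2\,\frac{d(f\circ\gamma)}{dt}\big|_{h(s)}\,h'(s)^2$, which cancels that term exactly; since a lightlike vector is nonzero, $\gamma'(h(s))\neq 0$, so $\frac{D\gammaf'}{ds}\equiv 0$, i.e., $\gammaf$ is an affinely parametrized geodesic of $e^{2f}g$.

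I do not expect a genuine obstacle — the result is classical — but the step needing care is the bookkeeping that singles out the factor $e^{2f}$ (rather than $e^{-2f}$) in the definition of $\tau$: it is forced precisely by the cancellation above, so an equivalent route is to \emph{derive} $\tau$ by solving the scalar reparametrization ODE $h'' + 2h'^2\,(f\circ\gamma)' = 0$ rather than verifying the given closed form. I would also state explicitly the regularity and interval claims used along the way ($\If$ connected, $\gammaf$ of class $C^2$) so that the conclusion matches the statement as written.
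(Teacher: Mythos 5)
Your proposal is correct and follows essentially the same route as the paper's proof: both reduce to the pre-geodesic identity \eqref{eqn:preg} and verify that the specific choice of $\tau$ makes the term proportional to $\gamma'$ cancel (your chain-rule formula $\frac{D\gammaf'}{ds}=h''\gamma'+(h')^2\cdf{\gamma'}{\gamma'}$ is just the Leibniz-rule computation the paper writes out). The only additions are your explicit checks of the lightlike condition and the regularity of $\tau$, which the paper leaves implicit.
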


\begin{proof}
As $\tau$ is a diffeomorphism with $(\tau^{-1})'(s) = \frac{1}{\tau'(\tau^{-1}(s))} = e^{-2f(\gammaf(s))}$,
$$
\gammaf'(s) = e^{-2f(\gammaf(s))}\gamma'(\tau^{-1}(s)).
$$
This choice of $\tau$ ensures that
\beqa
\cdf{\gammaf'}{\gammaf'}\,\Big|_s \!\!&=&\!\! e^{-2f(\gammaf(s))}\cdf{\gamma'}{(e^{-2f(\gammaf(s))}\gamma')}\,\Big|_{\tau^{-1}(s)}\nonumber\\
&=&\!\! e^{-2f(\gammaf(s))}\Big[\gamma'(e^{-2f(\gammaf(s))})\gamma' + (e^{-2f(\gammaf(s))})\cdf{\gamma'}{\gamma'}\Big]\,\Big|_{\tau^{-1}(s)}\nonumber\\
&\overset{\eqref{eqn:preg}}{=}&\!\! e^{-2f(\gammaf(s))}\Big[\underbrace{\gamma'(e^{-2f(\gamma(t))}) + 2(e^{-2f(\gamma(t))})\frac{d(f \circ \gamma)}{dt}}_{0}\Big]\gamma'(t)\nonumber\\
&=&\!\! 0,\nonumber
\eeqa
which completes the proof.
\end{proof}

As shown in \cite[Theorem~3.2]{minguzzi2}, the relevance of this fact for \eqref{eqn:0} is the following: Any solution of the $n$-dimensional Hamiltonian system $\ddot{x} = -\nabla_{\!x}V,$
when it is lifted to the lightlike geodesic \eqref{eqn:conf0} and reparametrized as in \eqref{eqn:gamma1}, \emph{is also a solution of an infinite ``conformal class" of non-diffeomorphic ODEs}.  This conformal class is written in eqn.~(34) of \cite{minguzzi2} as stationary points of an action (of which \eqref{eqn:0} and pp-waves are a special case).  Let us present eqn.~(34) of \cite{minguzzi2} here in terms of these conformal ODEs explicitly:

\begin{thm}[Conformal class of a Hamiltonian system]
\label{thm:1d}
Let $g$ be an $(n+2)$-dimensional pp-wave expressed in the Brinkmann coordinates $(v,t,x^1,\dots,x^n)$ of Proposition \ref{prop:Brink}.  For any $C^2$ function $f(t,x^1,\dots,x^n)$, the geodesic equations of motion of the conformal metric $e^{2f}g$ are
\beqa
\label{eqn:geod2*}
\left.\begin{array}{lcl}
\ddot{v} \!\!\!&=&\!\!\! \Big(2\frac{dV}{ds}-V_t\,\dot{t}\Big)\dot{t} -2Vf_t\,\dot{t}^2 - \sum_{i=1}^n\big(2f_i\dot{v} - f_t\dot{x}^i\big)\dot{x}^i,\phantom{\Big(\Big)}\\
\ddot{t} \!\!\!&=&\!\!\! -2\frac{df}{ds}\dot{t},\phantom{\Big(\Big)}\\
\ddot{x}^i \!\!\!&=&\!\!\! -V_i\,\dot{t}^2 +2(\dot{v}-V\dot{t})f_i\dot{t} - \big((\dot{x}^i)^2-\sum_{j\neq i}^n (\dot{x}^j)^2\big)f_i\phantom{\Big(\Big)}\\
&& \hspace{1in}-2\dot{x}^i\sum_{j\neq i}^n f_j\dot{x}^j -2f_t\,\dot{t}\dot{x}^i\commas i = 1,\dots,n.\phantom{\Big(\Big)}
\end{array}\right\}
\eeqa
Thus for every solution $x(t)$ of
$
\ddot{x}^i = -\nabla_{\!x}V,
$
its corresponding lightlike geodesic lift $\gamma(t) = (v(t),t,x(t))$ is, after the reparametrization \eqref{eqn:gamma1}, also a solution of \eqref{eqn:geod2*}.
\end{thm}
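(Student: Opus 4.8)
The plan is to prove the two assertions in turn: first derive the geodesic equations of the conformal metric $e^{2f}g$ by computing its Christoffel symbols from those of $g$, and then deduce that the reparametrized geodesic lifts of Proposition~\ref{prop:3} solve them, by appealing to the conformal invariance of lightlike geodesics (Proposition~\ref{prop:clg}).

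\emph{Step 1 (the equations \eqref{eqn:geod2*}).} Specializing the conformal change-of-connection identity displayed just before Proposition~\ref{prop:clg} to the coordinate fields $\partial_\alpha,\partial_\beta$ gives, for the Christoffel symbols of $e^{2f}g$,
$$
\widetilde{\Gamma}^{\mu}_{\alpha\beta} = \Gamma^{\mu}_{\alpha\beta} + \delta^{\mu}_{\alpha}f_\beta + \delta^{\mu}_{\beta}f_\alpha - g_{\alpha\beta}\,g^{\mu\nu}f_\nu,
$$
where the $\Gamma^{\mu}_{\alpha\beta}$ are the Christoffel symbols of the pp-wave \eqref{eqn:metric}, all listed in \eqref{eqn:Christ}. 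For the metric \eqref{eqn:metric} the inverse metric is $g^{vv} = 2V$, $g^{vt} = g^{tv} = 1$, $g^{tt} = 0$, $g^{ij} = \delta^{ij}$ (all other entries zero), and since $f = f(t,x^1,\dots,x^n)$ does not depend on $v$ we have $f_v = 0$, so $\mathrm{grad}_{\scalebox{0.6}{$g$}} f$ has components $f_t$ in the $v$-slot, $f_i$ in the $x^i$-slot, and $0$ in the $t$-slot. Substituting $\widetilde\Gamma$ into the geodesic equation $\ddot z^{\mu} + \widetilde\Gamma^{\mu}_{\alpha\beta}\dot z^{\alpha}\dot z^{\beta} = 0$, the two conformal correction terms contribute $-2\dot z^{\mu}\,\tfrac{df}{ds}$ and $+Q\,(\mathrm{grad}_{\scalebox{0.6}{$g$}} f)^{\mu}$ to $\ddot z^{\mu}$, where $Q \defeq g(\dot\gamma,\dot\gamma) = 2\dot v\dot t - 2V\dot t^{2} + \sum_i(\dot x^{i})^{2}$, while the leftover $\Gamma^{\mu}_{\alpha\beta}\dot z^\alpha \dot z^\beta$ terms are exactly the right-hand sides already computed in \eqref{eqn:geod*}. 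Writing this out for $\mu = t$, then $\mu = v$ (using the cancellation between the $2\dot v\dot t f_t$ inside $Q f_t$ and the $-2\dot v\dot t f_t$ inside $-2\dot v\,\tfrac{df}{ds}$), and then $\mu = i$ (splitting $Q f_i$ and $-2\dot x^i\,\tfrac{df}{ds}$ into their $j = i$ and $j \neq i$ pieces) produces precisely \eqref{eqn:geod2*}.

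\emph{Step 2 (lifted solutions solve \eqref{eqn:geod2*}).} Let $x(t)$ be a maximal solution of $\ddot x = -\nabla_{\!x}V$ and $\gamma(t) = (v(t),t,x(t))$ its lightlike geodesic lift, i.e. the geodesic of $g$ with the initial data \eqref{eqn:conf0} and $\vep_{\gamma} = 0$; by Proposition~\ref{prop:3} this $\gamma$ is indeed a lightlike $g$-geodesic. Proposition~\ref{prop:clg}, applied to $\gamma$ and the $C^2$ function $f$, then says that the reparametrized curve $\gammaf$ of \eqref{eqn:gamma1} is a lightlike geodesic of $e^{2f}g$; being a geodesic of $e^{2f}g$, it satisfies that metric's geodesic equations, which are \eqref{eqn:geod2*} by Step~1. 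This is the claim.

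The only genuine labor is the index bookkeeping in Step~1, in particular the diagonal/off-diagonal split in the $x^i$-equation and the sign cancellations in the $v$-equation; Step~2 is a direct quotation of Propositions~\ref{prop:3} and \ref{prop:clg}.
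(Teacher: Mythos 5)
Your proposal is correct and follows essentially the same route as the paper: compute the Christoffel symbols of $e^{2f}g$ (you via the change-of-connection identity $\widetilde{\Gamma}^{\mu}_{\alpha\beta} = \Gamma^{\mu}_{\alpha\beta} + \delta^{\mu}_{\alpha}f_\beta + \delta^{\mu}_{\beta}f_\alpha - g_{\alpha\beta}g^{\mu\nu}f_\nu$, the paper by reading them off directly from the conformal metric and its inverse), substitute into the geodesic equation, and then invoke Proposition~\ref{prop:clg} for the statement about lifted solutions. Your bookkeeping of the $-2\dot z^\mu\,\frac{df}{ds}$ and $Q\,(\mathrm{grad}_{\scalebox{0.6}{$g$}}f)^\mu$ contributions checks out against \eqref{eqn:geod2*}, so no changes are needed.
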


\begin{proof}
In the coordinate basis $\{\partial_v,\partial_t,\partial_1,\dots,\partial_n\}$, the components of the conformal metric $e^{2f}g$ and its inverse are, respectively,
$$
e^{2f}\begin{pmatrix}
0 & 1 & 0 & \cdots & 0\\
1 & -2V(t,x) & 0 & \cdots & 0\\
0 & 0 & 1 & \cdots &0\\
\vdots & \vdots & \vdots & \ddots &\vdots\\
0 & 0 & 0 & \cdots & 1
\end{pmatrix} \comma e^{-2f}\begin{pmatrix}
2V(t,x) & 1 & 0 & \cdots & 0\\
1 & 0 & 0 & \cdots & 0\\
0 & 0 & 1 & \cdots &0\\
\vdots & \vdots & \vdots & \ddots &\vdots\\
0 & 0 & 0 & \cdots & 1
\end{pmatrix}\cdot
$$
Together, these yield the following nonvanishing Christoffel symbols:
\beqa
\Gamma_{vi}^v = \Gamma_{ti}^t = \Gamma_{ii}^i = -\Gamma_{jj}^i = -\Gamma_{vt}^i = \Gamma_{ji}^j = f_i \!\!\!\!\!&\commas&\!\!\!\!\! -\Gamma_{ii}^v = \frac{1}{2}\Gamma_{tt}^t = \Gamma_{it}^i = f_t,\nonumber\\
\Gamma_{tt}^v = 2Vf_t - V_t \comma \Gamma_{ti}^v = -V_i \!\!\!&\comma&\!\!\!\Gamma_{tt}^i = 2Vf_i+V_i.\nonumber  
\eeqa
From here \eqref{eqn:geod2*} is straightforwardly derived.  The statement regarding solutions of the $n$-dimensional Hamiltonian follows from Proposition \ref{prop:clg}.
\end{proof}

E.g., for $n=1$, $f=f(x)$, and $V=V(x)$, \eqref{eqn:geod2*} reduces to the system
$$
\ddot{v} = 2\frac{dV}{ds}\dot{t} - 2\frac{df}{ds}\dot{v} \commas \ddot{x} = -V_x\,\dot{t}^2 - (2V\dot{t}^2 +\dot{x}^2-2\dot{t}\dot{v})f_x \commas  \dot{t} = \dot{t}_0e^{\text{$-2\int_{s_0}^s\!\frac{df}{dr}dr$}}.
$$
By Theorem \ref{thm:1d}, solutions of $\ddot{x} = -V_x$ thus lift to (lightlike) solutions of this system.  In fact Theorem \ref{thm:1d} is true even when $f_v \neq 0$, although \eqref{eqn:geod2*} is more elaborate in this case.  Note also that, while $\nabla$ and $\cdf{}{}$ arise from conformal metrics, they will be projectively equivalent\,---\,i.e., up to reparametrization, \emph{all} their geodesics will coincide\,---\,if and only if $\text{grad}_{\scalebox{0.6}{$g$}} f = 0$ (see \cite[Theorem~1.2.3]{taber}).  Finally, see \cite{duval_gibbons} for a physical application of the conformal invariance of lightlike geodesics of pp-waves, and \cite{gordon2,ong} for another conformal approach to \eqref{eqn:0} via the (Riemannian) ``Jacobi metric."

%%%
\section{Curvature and the stability of Hamiltonian trajectories}
\label{sec:conjugate}
Let us now consider \emph{conjugate points} along $\gamma(t) = (v(t),t,x(t))$; i.e., those points along $\gamma(t)$ at which ``neighboring" geodesics accumulate.  Projecting onto $x(t)$ then gives conditions on $V$ under which trajectories of $\ddot{x} = -\nabla_{\!x}V$ will accumulate.  Our treatment follows \cite{flores2}\,---\,Theorems \ref{thm:conj_1} and \ref{thm:last_conj} below complement \cite[Proposition~6.4, Remark~6.5]{flores2}. (See also \cite{casetti2}, where conjugate points are applied to chaos theory.)

\begin{defn}[Conjugate point]
\label{def:conj}
Let $(M,g)$ be a semi-Riemannian manifold and $\gamma\colon [0,b] \lra M$ a geodesic of any causal character. A $C^2$ vector field $J(t)$ along $\gamma(t)$ is a \emph{Jacobi field} if \beqa
\label{eqn:J1}
J''+R_{\scalebox{0.6}{$g$}}(J,\gamma')\gamma'\,\Big|_t = 0,
\eeqa
where $R_{\scalebox{0.6}{$g$}}$ is the curvature endomorphism of $g$.  The points $\gamma(0)$ and $\gamma(b)$ are \emph{conjugate} if there is a nontrivial Jacobi field $J(t)$ along $\gamma(t)$ satisfying $J(0) = J(b) = 0$. Their \emph{multiplicity} is the dimension of the linear subspace of Jacobi fields $J(t)$ along $\gamma(t)$ satisfying $J(0) = J(b) = 0$.
\end{defn}

If $\gamma(b)$ is conjugate to $\gamma(0) = (v(0),0,x(0))$, then there is a one-parameter family $\gamma_s(t)$ of geodesics (with $\gamma_0(t) = \gamma(t)$) all starting at $\gamma(0)$ and ``accumulating" at $\gamma(b)$\,---\,i.e., the corresponding Jacobi field $J(t)$, which equals the infinitesimal variation $\frac{\partial \gamma_s(t)}{\partial s}\big|_{s=0}$ of $\gamma_s(t)$ from $\gamma(t)$, will satisfy $J(b) = 0$ (see Lemma \ref{prop:check} below, and note that $\gamma_s(b)$ needn't actually equal $\gamma(b)$).  If $g$ is a pp-wave, then projecting Definition \ref{def:conj} down to $x(t)$ leads naturally to a notion of ``accumulation point" for \eqref{eqn:0}. This correspondence was formulated and studied in \cite{flores2} (in fact for manifolds more general than pp-waves).  The following is a special case of \cite[Definition~6.1]{flores2}:

\begin{defn}[Conjugate point of a Hamiltonian trajectory]
\label{def:conj2}
Let $V(t,x)$ be a $C^2$ function, $x\colon [0,b]\lra \RR^n$ a nonconstant solution of $\ddot{x} = -\nabla_{\!x}V$, and $\gamma(t) = (v(t),t,x(t))$ its geodesic lift (of any causal character).  Then \emph{$x(b)$ is conjugate to $x(0)$} if $\gamma(b)$ is conjugate to $\gamma(0)$ as in Definition \ref{def:conj}.
\end{defn}

Our first task is to show how conjugate points along $x(t)$ give rise to a one-parameter family of \emph{distinct} solutions of $\ddot{x} = -\nabla_{\!x}V$ all starting at $x(0)$ and accumulating at $x(b)$, and to make precise the notion of ``accumulation point" just described:

\begin{lemma}
\label{prop:check}
Let $x\colon [0,b] \lra \RR^n$ be a nonconstant solution of $\ddot{x} = -\nabla_{\!x}V$.  If $x(b)$ is conjugate to $x(0)$, then there is a one-parameter family of distinct solutions of $\ddot{x} = -\nabla_{\!x}V$ starting at $x(0)$ and accumulating at $x(b)$.
\end{lemma}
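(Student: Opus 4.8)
The plan is to translate the statement entirely into the language of the pp-wave lift and then invoke the classical geometric fact that conjugate points are endpoints of a nontrivial geodesic variation. Since $x(b)$ is conjugate to $x(0)$ (Definition \ref{def:conj2}), by definition the geodesic lift $\gamma(t) = (v(t),t,x(t))$ of $g$ from Definition \ref{def:lift} has $\gamma(b)$ conjugate to $\gamma(0)$ in the sense of Definition \ref{def:conj}: there is a nontrivial Jacobi field $J(t)$ along $\gamma$ with $J(0) = J(b) = 0$. First I would recall the standard construction: a Jacobi field vanishing at two points arises as the variation field $J(t) = \frac{\partial \gamma_s(t)}{\partial s}\big|_{s=0}$ of a smooth one-parameter family $\gamma_s(t)$ of geodesics of $g$, all with $\gamma_s(0) = \gamma(0)$, whose terminal points $\gamma_s(b)$ differ from $\gamma(b)$ only to second order in $s$ (this is the content cited as Lemma \ref{prop:check}'s own forward reference—actually it is this lemma—so I will construct it directly rather than quote it). Concretely, take $\gamma_s$ to be the geodesics with $\gamma_s(0)=\gamma(0)$ and initial velocity $\gamma_s'(0) = \gamma'(0) + s\,W$, where $W = J'(0) \neq 0$ (nonzero since $J$ is nontrivial and vanishes at $t=0$); the variation field of this family is exactly $J$.

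The second step is to read off what this family looks like in coordinates. Each $\gamma_s(t) = (v_s(t), t_s(t), x_s(t))$ is a geodesic of the pp-wave \eqref{def:corr}, hence by Proposition \ref{prop:geod} satisfies \eqref{eqn:geod*}; in particular $\ddot t_s = 0$. I need the $t$-component of all the $\gamma_s$ to remain the affine parameter, i.e. $t_s(t) = t$ for all $s$, so that the projections $x_s(t)$ are genuine solutions of $\ddot x = -\nabla_{\!x} V$ with the same time parametrization. This is arranged by choosing the variation vector $W = J'(0)$ to have vanishing $t$-component; since $J(0)=0$ and $J(b)=0$ force the $t$-component of $J$ (which solves $\ddot{(\,\cdot\,)}=0$ by the second line of \eqref{eqn:geod*} linearized, or directly because $t_s(b)-t_s(0)$ is affine in $t$) to vanish identically, this is automatic—$J$ has zero $t$-component, hence so does $J'(0)=W$. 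Then each $x_s(t)$ solves $\ddot x_s = -\nabla_{\!x}V$ on $[0,b]$ with $x_s(0)=x(0)$, and the $x$-components of $J$, call it $J^x(t)$, form a nontrivial solution of the Jacobi equation for this family with $J^x(0)=J^x(b)=0$; nontriviality of $J^x$ follows because if $J^x \equiv 0$ then $J$ would reduce to its $v$-component, and the $v$-component Jacobi equation together with $J(0)=0$ forces... (here I would check that a $v$-only Jacobi field vanishing at $0$ with the curvature structure \eqref{def:ppwave} cannot also be nontrivial unless $J'(0)$ has nonzero $v$-part, which is absorbed into the freedom of the lift and does not affect $x_s$)—in any case I can choose $W$ with nonzero $x$-part, which is possible precisely because the conjugacy is witnessed at the level of $x$.

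The third step is to extract genuine \emph{distinctness} and the \emph{accumulation} statement. The solutions $x_s$ are pairwise distinct for small $s\neq 0$ because $\frac{d}{ds}\big|_{0} x_s'(0) = W^x = (J^x)'(0) \neq 0$ (if $(J^x)'(0)=0$ then $J^x\equiv 0$ by uniqueness for the linear Jacobi ODE, contradicting nontriviality given $J^x(0)=0$); so the map $s \mapsto x_s'(0)$ is a nonconstant curve and the $x_s$ are distinct as solutions. For accumulation: define $x(b)$ to be an \emph{accumulation point} of the family in the sense that $x_s(b) = x(b) + O(s^2)$, equivalently $\frac{\partial x_s(b)}{\partial s}\big|_{s=0} = J^x(b) = 0$; this is the precise meaning promised in the paragraph preceding the lemma, and it is exactly the condition supplied by conjugacy. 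I would state this explicitly as the definition of accumulation being used, then conclude. The main obstacle I anticipate is the bookkeeping in the second step: one must be careful that the Jacobi field's $t$-component genuinely vanishes (so that reparametrization issues do not intrude) and that the $v$-component, which carries no dynamical information, can be separated off cleanly so that nontriviality of $J$ transfers to nontriviality of $J^x$—this uses the special upper-triangular structure of the Christoffel symbols \eqref{eqn:Christ} and hence of the linearized equations, but it requires writing the Jacobi equation \eqref{eqn:J1} out in Brinkmann coordinates and observing that the $x^i$-equations decouple from $v$ while the $v$-equation is driven by the $x^i$'s. Everything else is the standard variation-of-geodesics argument.
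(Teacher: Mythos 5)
Your strategy is the same as the paper's: lift to the pp-wave, take the nontrivial Jacobi field $J$ with $J(0)=J(b)=0$, show the $t$-component vanishes identically, build the family $\gamma_s(t)=\exp_{\gamma(0)}\big(t(\gamma'(0)+sJ'(0))\big)$, and project to get the $x_s$. The steps you carry out (vanishing of $J^t$ from $\ddot J^t=0$, hence $t_s(t)=t$ for the whole family; accumulation read off as $\partial_s x_s(b)\big|_{s=0}=J^x(b)=0$; distinctness from $\dot J^x(0)\neq 0$ via uniqueness for the linear Jacobi ODE) all match the paper's proof.

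The one point you flag but do not close is genuinely needed: you must exclude the possibility that $J$ is supported entirely in the $\partial_v$-direction, i.e.\ $J^t\equiv J^x\equiv 0$ but $J^v\not\equiv 0$. If that case occurred, the conjugacy of $\gamma(b)$ to $\gamma(0)$ would carry no information about $x(t)$ and no family of distinct $x_s$ would result; and your fallback --- ``in any case I can choose $W$ with nonzero $x$-part, because the conjugacy is witnessed at the level of $x$'' --- is circular, since conjugacy for $x$ is \emph{defined} through the lift and nothing a priori prevents the witnessing Jacobi field from being vertical. The exclusion is true and short, but it must be done. The paper does it via orthogonality: $\tfrac{d^2}{dt^2}\,g(J,\gamma')=g(J'',\gamma')=-\text{Rm}(J,\gamma',\gamma',\gamma')=0$, so $g(J,\gamma')$ is affine and vanishes at $t=0,b$, hence $g(J,\gamma')\equiv 0$; with $J^t\equiv 0$ this reads $J^v=-\sum_i J^i\dot x^i$, so $J^x\equiv 0$ would force $J^v\equiv 0$ and $J\equiv 0$, a contradiction. (The route you sketch also works: $\partial_v$ parallel gives $R(\partial_v,\cdot)\,\cdot=0$, so a vertical Jacobi field satisfies $\ddot J^v=0$ and the double vanishing kills it.) With $J^x\not\equiv 0$ secured, $J^x(0)=0$ and uniqueness give $\dot J^x(0)\neq 0$ --- this is exactly what the paper extracts from $J'(0)\neq 0$ after writing $J'(0)$ in the form \eqref{eqn:J'} --- and the rest of your argument goes through.
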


\begin{proof}
By Definition \ref{def:conj2}, the geodesic lift $\gamma(t) = (v(t),t,x(t))$ of $x(t)$ has $\gamma(b)$ conjugate to $\gamma(0)$ (the value of $\vep_{\scalebox{0.6}{$\gamma$}}$ in \eqref{eqn:conf0} will not matter in what follows).  Thus there is a nontrivial Jacobi field $J(t)$ along $\gamma(t)$ satisfying $J(0) = J(b) = 0$.  That $J(t)$ is nontrivial implies that $J'(0) \neq 0$; that it vanishes at two points implies that it must also be orthogonal to $\gamma'(t)$ (by a similar reasoning as in \eqref{eqn:zeroJ} below).  Writing $$J(t) = (J^v(t),J^t(t),J^1(t),\dots,J^{n}(t))$$ in the Brinkmann coordinates of Proposition \ref{prop:Brink}, let us start by observing that the $t$-component $J^t(t) = g(J,\partial_v)\big|_{\gamma(t)}$ must be linear in $t$ because $\partial_v$ is a parallel vector field:
\beqa
\label{eqn:zeroJ}
\ddot{J}^t(t) =g(J'',\partial_v)\Big|_{\gamma(t)} \overset{\eqref{eqn:J1}}{=} -\text{Rm}_{\scalebox{0.6}{$g$}}(J,\gamma',\gamma',\partial_v)\Big|_{\gamma(t)} = 0.
\eeqa
But if $J(t)$ is to be zero at two distinct points, then we must have $J^t(t) = 0$.  Next, the orthogonality condition, $g(J,\gamma')\big|_{\gamma(t)} = 0$, implies that $J^v(t) = -\!\sum_{i=1}^n J^i(t)\dot{x}^i(t)$, so that $J(t)$ takes the form
$$
J(t) = -\Big(\sum_{i=1}^{n}J^i(t)\dot{x}^i(t)\Big)\partial_v + \sum_{i=1}^{n} J^i(t) \partial_i\,\Big|_{\gamma(t)}.
$$
Taking now the covariant derivative of $J(t)$ along $\gamma(t)$ (and recalling \eqref{eqn:Christ}), it is straightforward to verify that
$$
J'(t) = \bigg[\!\!-\!\frac{d}{dt}\Big(\sum_{i=1}^n J^i(t)\dot{x}^i(t)\Big) - \sum_{i=1}^n J^i(t)V_i(t,x(t))\bigg]\partial_v + \sum_{i=1}^n\dot{J}^i(t)\partial_i\,\Big|_{\gamma(t)}.
$$
At $t=0$, this simplifies because each $J^i(0) = 0$:
\beqa
\label{eqn:J'}
J'(0) = -\Big(\sum_{i=1}^n \dot{J}^i(0)\dot{x}^i(0)\Big)\partial_v + \sum_{i=1}^n\dot{J}^i(0)\partial_i\,\Big|_{\gamma(0)} \neq 0.
\eeqa
Now consider the one-parameter family of geodesics $\gamma_s(t)$ defined by
\beqa
\label{eqn:VVF}
\gamma_s(t) \defeq \text{exp}_{\gamma(0)}\big(t(\gamma'(0)+sJ'(0))\big) \comma s \in (-\vep,\vep)\commass t \in [0,b],
\eeqa
where $\text{exp}_{\gamma(0)}$ is the exponential map and $\vep$ is small enough; note that $\gamma_0(t) = \gamma(t)$ and that each $\gamma_s(t)$ starts at $\gamma(0) = (v(0),0,x(0))$ in the direction
$$
\underbrace{\,\gamma'(0)+sJ'(0)\,}_{\text{$\gamma_s'(0)$}} \overset{\eqref{eqn:J'}}{=} \Big(\dot{v}(0)-s\!\sum_{i=1}^n \dot{J}^i(0)\dot{x}^i(0)\Big)\partial_v+\partial_t + \sum_{i=1}^n\big(\dot{x}^i(0)+s\dot{J}^i(0)\big)\partial_i\,\Big|_{\gamma(t)}.
$$
Furthermore, $\gamma_s(t) = (v_s(t),t,x_s(t))$ (though $g(\gamma_s',\gamma_s')$ may not equal $\vep_{\scalebox{0.6}{$\gamma$}}$), hence is a lift of a solution $x_s(t)$ of $\ddot{x} = -\nabla_{\!x}V$ with $x_s(0) = x(0)$ and $\dot{x}_s(0) = \dot{x}(0)+s\dot{J}(0)$.  As $\frac{\partial \gamma_s(t)}{\partial s}\big|_{s=0} = J(t)$ (cf.~\cite[Proposition~10,~p.~271]{o1983}), $J(b) = 0$ indicates the accumulation of the $\gamma_s(b)$'s, and thus their $x_s(b)$'s.  By \eqref{eqn:J'}, some $\dot{J}^i(0) \neq 0$, hence each $\dot{x}_s(0)$ (and thus each $x_s(t)$) is distinct.
\end{proof}

With this established, let us now recall how conjugate points behave with respect to conformal transformations of $g$; our treatment follows \cite{minguzzi}.  First, observe that $\gamma'(t)$ itself is a Jacobi field along $\gamma(t)$; as a consequence, for any Jacobi field $J(t)$ and any $C^2$ function $f(t)$ along $\gamma(t)$,
\beqa
\label{eqn:J2}
(J+f\gamma')'' + \underbrace{\,R(J,\gamma')\gamma' + f\cancelto{0}{R(\gamma',\gamma')\gamma'}\,}_{\text{$R(J+f\gamma',\gamma')\gamma'$}} = \underbrace{\,(J''+ R(J,\gamma')\gamma')\,}_{0} + f''\gamma'.
\eeqa
For lightlike geodesics, \eqref{eqn:J2} has the following consequence.  First, consider the subspace $\mathcal{N}(\gamma)$ of $C^2$ vector fields along $\gamma(t)$ that are orthogonal to $\gamma(t)$; as mentioned in Lemma \ref{prop:check}, this includes all Jacobi fields that vanish at two points along $\gamma(t)$.  Now, if $\gamma(t)$ is a \emph{lightlike} geodesic, then $\gamma'(t) \in \mathcal{N}(\gamma)$ also.  Thus we can consider the quotient space $\mathcal{N}(\gamma)/\gamma'$, whose equivalence classes are defined by $[J_1] = [J_2]$ if and only if $J_2 = J_1 +f\gamma'$
for some $C^2$ function $f(t)$ along $\gamma(t)$.  Furthermore, we can take derivatives of equivalence classes along $\gamma(t)$ by defining, for any $X \in \mathcal{N}(\gamma)$,
$$
[X]' \defeq [X'] \comma \cd{\gamma'}{[X]} \defeq [\cd{\gamma'}{X}],
$$
both of which are well defined when $\gamma(t)$ is a lightlike geodesic.  As a consequence, the Jacobi equation \eqref{eqn:J1}, too, descends onto $\mathcal{N}(\gamma)/\gamma'$.  Indeed,  \eqref{eqn:J2} shows that if $J_1$ satisfies the Jacobi equation,
$
J_1'' + R(J_1,\gamma')\gamma'\,\big|_t = 0,
$
then every representative $J_2$ of its equivalence class $[J_1] \in \mathcal{N}(\gamma)/\gamma'$ satisfies the following ``quotient Jacobi equation":
$$
[J_2]'' + [R(J_2,\gamma')\gamma'] = [J_2'' + R(J_2,\gamma')\gamma'] \overset{\eqref{eqn:J2}}{=} [f''\gamma'] = 0.
$$
This suggests that, along the lightlike geodesic $\gamma(t)$, the Jacobi equation effectively only sees the conformal structure.  The following well known result confirms that this is indeed the case:

\begin{prop}[Conformal invariance of lightlike conjugate points]
\label{prop:clg2}
Let $\gamma\colon[0,b] \lra M$ be a lightlike geodesic of a semi-Riemannian manifold $(M,g)$ and $J(t)$ a Jacobi field along $\gamma(t)$ satisfying $J(0) = J(b) = 0$.  For any conformal metric $e^{2f}g$, let $\gammaf\colon \If \lra M$ denote the reparametrization of $\gamma$ as a geodesic of $e^{2f}g$, as in \eqref{eqn:gamma1}.  Then $\Jf \colon \If \lra M$  defined by $\Jf \defeq J(\tau^{-1}(s))$ is a Jacobi field along $\gammaf$ satisfying $\Jf(0) = \Jf(\tau(b)) = 0$.  In particular, the set of conjugate points of a lightlike geodesic, including their multiplicities, is a conformal invariant.
\end{prop}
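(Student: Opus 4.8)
The plan builds on the observation recorded just before the statement: along a lightlike geodesic the Jacobi equation descends to the quotient $\mathcal{N}(\gamma)/\gamma'$ and there depends only on the conformal class of $g$. So I would (i) reduce to that quotient, (ii) check that the Jacobi operator is conformally natural on it, and (iii) lift back. For (i), note first that $J(0)=J(b)=0$ forces $J$ to be everywhere orthogonal to $\gamma'$: the function $t\mapsto g(J,\gamma')$ satisfies $\tfrac{d^2}{dt^2}g(J,\gamma')=g(J'',\gamma')=-g(R_{\scalebox{0.6}{$g$}}(J,\gamma')\gamma',\gamma')=0$ by antisymmetry of the curvature in its last two slots, so it is affine and, vanishing twice, identically zero; hence $J\in\mathcal{N}(\gamma)$, and $\gamma'\in\mathcal{N}(\gamma)$ too since $\gamma$ is lightlike. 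Because $(e^{2f}g)(X,\gammaf')=0\iff g(X,\gamma')=0$ pointwise, Proposition~\ref{prop:clg} shows $\gammaf'$ is a lightlike $e^{2f}g$-geodesic field, $\Jf\in\mathcal{N}(\gammaf)$, and $\mathcal{N}(\gamma)$, $\mathcal{N}(\gammaf)$ are the same bundle under $s=\tau(t)$; in particular $\mathcal{N}(\gamma)/\gamma'\cong\mathcal{N}(\gammaf)/\gammaf'$.

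For (ii) I would substitute the connection formula from the excerpt, $\cdf{X}{Y}=\cd{X}{Y}+X(f)Y+Y(f)X-g(X,Y)\,\text{grad}_{\scalebox{0.6}{$g$}}f$, and the curvature law it implies, into $\cdf{\gammaf'}{\cdf{\gammaf'}{\Jf}}+R_{e^{2f}g}(\Jf,\gammaf')\gammaf'$, with the reparametrization $s=\tau(t)$, $\tau'=e^{2f\circ\gamma}$, of \eqref{eqn:gamma1}. Exploiting $g(\gammaf',\Jf)=0$, $g(\gammaf',\gammaf')=0$, and the consequence $g(\gamma',\cd{\gamma'}{J})=0$ of the first identity, a direct computation collapses everything except a tangential remainder:
\beqa
\cdf{\gammaf'}{\cdf{\gammaf'}{\Jf}}+R_{e^{2f}g}(\Jf,\gammaf')\gammaf' &=& \rho(s)\,\gammaf'\nonumber
\eeqa
for a scalar $\rho$ along $\gammaf$ (one finds $\rho=2e^{-2f}\tfrac{d}{dt}\big(df(J)\big)$ in the original parameter, but its precise form is immaterial). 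Hence the class $[\Jf]$ solves the quotient Jacobi equation of $(e^{2f}g,\gammaf)$; equivalently, $\Jf$ is a Jacobi field along $\gammaf$ up to an additive $C^2$ multiple of $\gammaf'$.

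For (iii), $\gammaf'$ is a Jacobi field along $\gammaf$, so by linearity (cf.~\eqref{eqn:J2}) $\,(\Jf+w\gammaf')''+R_{e^{2f}g}(\Jf+w\gammaf',\gammaf')\gammaf'=(\rho+w'')\gammaf'$ for any $C^2$ function $w$; solving the Dirichlet problem $w''=-\rho$, $w(0)=w(\tau(b))=0$ (uniquely solvable, as the homogeneous problem has only $w\equiv0$) yields a genuine $e^{2f}g$-Jacobi field $\widehat{\Jf}:=\Jf+w\gammaf'$ along $\gammaf$ with $\widehat{\Jf}(0)=J(0)=0$ and $\widehat{\Jf}(\tau(b))=J(b)=0$. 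It is nontrivial, for $[\Jf]=0$ would make $J$ a $C^2$ multiple of $\gamma'$, hence (again by \eqref{eqn:J2}) an affine multiple, which cannot vanish at $t=0$ and $t=b$ without being zero; so $\gammaf(\tau(b))$ is conjugate to $\gammaf(0)$. Finally, $J\mapsto\widehat{\Jf}$ is linear and injective on the space of $g$-Jacobi fields along $\gamma$ vanishing at $0,b$ (its kernel consists of affine multiples of $\gamma'$ vanishing twice, hence of $0$), and the reverse conformal change $g=e^{-2f}(e^{2f}g)$ gives an injective linear map the other way; comparing dimensions yields equality of multiplicities, and applying the argument to every pair of conjugate points shows the conjugate sets correspond under $\tau$.

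The main obstacle is step (ii): organizing the transformed connection and curvature so that the genuinely transverse contributions — the full term $R_{\scalebox{0.6}{$g$}}(J,\gamma')\gamma'$ and the ``$J$-component'' generated by $\text{Hess}\,f$ — cancel against one another, leaving only the tangential $\rho\,\gammaf'$. This cancellation is exactly where the lightlike hypothesis $g(\gamma',\gamma')=0$ (which annihilates the gradient term in the connection formula) and the specific choice $\tau'=e^{2f\circ\gamma}$ (which makes $\gammaf$ affinely $e^{2f}g$-geodesic, removing the spurious $\cdf{\gammaf'}{\gammaf'}$-terms) are used; disposing of the remainder $\rho\,\gammaf'$ and the bookkeeping for multiplicities is then routine.
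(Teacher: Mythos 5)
Your proposal is correct, and it is worth noting that the paper does not actually prove this proposition at all\,---\,it defers entirely to Minguzzi's Theorem~2.36\,---\,so your argument supplies the missing details. The route you take is the standard one and is exactly the quotient-space picture the paper sets up in the paragraphs preceding the statement: reduce to $\mathcal{N}(\gamma)/\gamma'$, check that the quotient Jacobi operator is conformally natural there, and then correct by a tangential term to recover a genuine Jacobi field. The only place where work is asserted rather than performed is your step (ii), the claim that $\cdf{\gammaf'}{\cdf{\gammaf'}{\Jf}} + R_{e^{2f}g}(\Jf,\gammaf')\gammaf'$ is purely tangential. This does hold, and the cancellation you point to is the right one: writing $\dot f = \tfrac{d(f\circ\gamma)}{dt}$, the double covariant derivative of $\Jf$ (computed in the $t$-parameter and using $g(J,\gamma')=g(\gamma',J')=0$ to kill the gradient terms in the connection formula) contributes a transverse piece $(\ddot f - \dot f^{2})J$ beyond $J''$, while the conformal transformation of the curvature contributes $-h(\gamma',\gamma')J$ with $h = \mathrm{Hess}_{g}f - df\otimes df + \tfrac{1}{2}|\mathrm{grad}_{g}f|^{2}\,g$, and $h(\gamma',\gamma') = \ddot f - \dot f^{2}$ precisely because $g(\gamma',\gamma')=0$; every other non-tangential term vanishes by orthogonality or lightlikeness, leaving $J''+R_{\scalebox{0.6}{$g$}}(J,\gamma')\gamma'=0$ plus a multiple of $\gamma'$. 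Your steps (i) and (iii)\,---\,deducing $J\perp\gamma'$ from the double vanishing, solving the Dirichlet problem $w''=-\rho$ to absorb the tangential remainder, checking nontriviality by noting that a Jacobi field proportional to $\gamma'$ is an affine multiple of it, and obtaining equality of multiplicities from the two injections\,---\,are all sound, so the argument is complete as a proof of the cited result.
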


\begin{proof}
We refer the reader to \cite[Theorem~2.36]{minguzzi} for a proof.
\end{proof}

With this in hand, we can add to the conclusions of Theorem \ref{thm:1d}:

\begin{cor}
\label{cor:main}
In Theorem \ref{thm:1d}, the conjugate points of the lightlike geodesic lifts, together with their multiplicities, are the same for the system of ODEs \eqref{eqn:geod*} and \eqref{eqn:geod2*}.
\end{cor}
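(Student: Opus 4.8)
The plan is to recognize both ODE systems as geodesic equations and then invoke Proposition~\ref{prop:clg2}. By Proposition~\ref{prop:geod}, the system \eqref{eqn:geod*} is the geodesic system of the pp-wave $g$ in the Brinkmann coordinates $(v,t,x^1,\dots,x^n)$, and by Theorem~\ref{thm:1d} the system \eqref{eqn:geod2*} is the geodesic system of the conformal metric $e^{2f}g$ in the same coordinates. Fix a nonconstant solution $x(t)$ of $\ddot{x} = -\nabla_{\!x}V$ on $[0,b]$ and let $\gamma(t) = (v(t),t,x(t))$ be its lightlike geodesic lift, a solution of \eqref{eqn:geod*} with $g(\gamma',\gamma') = 0$. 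Since a tangent vector is lightlike for $g$ if and only if it is lightlike for $e^{2f}g$, the reparametrized curve $\gammaf = \gamma\circ\tau^{-1}$ of \eqref{eqn:gamma1} is again a \emph{lightlike} geodesic, now of $e^{2f}g$, by Proposition~\ref{prop:clg}, hence a solution of \eqref{eqn:geod2*}.

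Now apply Proposition~\ref{prop:clg2}. If $J(t)$ is a Jacobi field along $\gamma$ with $J(0) = J(b) = 0$, then $\Jf \defeq J\circ\tau^{-1}$ is a Jacobi field along $\gammaf$ with $\Jf(0) = \Jf(\tau(b)) = 0$, and this correspondence is a linear isomorphism between the two spaces of such Jacobi fields: injectivity is immediate because $\tau^{-1}$ is a diffeomorphism, and surjectivity follows by applying the same statement with $e^{-2f}$ in place of $e^{2f}$, noting $\gamma = (\gammaf)_{-f}$. Consequently $\gamma(b)$ is conjugate to $\gamma(0)$ along $\gamma$, as a $g$-geodesic, with some multiplicity $m$ if and only if $\gammaf(\tau(b))$ is conjugate to $\gammaf(0)$ along $\gammaf$, as an $e^{2f}g$-geodesic, with the same multiplicity $m$. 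Unwinding Definition~\ref{def:conj2}, this says precisely that the conjugate points of the lightlike geodesic lift, together with their multiplicities, computed inside the $g$-picture \eqref{eqn:geod*} agree (after the reparametrization $\tau$) with those computed inside the $e^{2f}g$-picture \eqref{eqn:geod2*}, which is the claim.

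There is essentially no obstacle here: the statement is a direct corollary of Proposition~\ref{prop:clg2}. The only points needing care are purely bookkeeping — that ``lightlike'' is a conformal invariant so that the lift $\gammaf$ is again a lightlike geodesic, to which Proposition~\ref{prop:clg2} genuinely applies, and that the parameter shift $b \mapsto \tau(b)$ is tracked consistently so one is comparing the truly corresponding conjugate pairs. It is perhaps worth remarking that Definition~\ref{def:conj2} of a conjugate point of the trajectory $x(t)$ a priori depends on the choice of metric in the conformal class used to form the lift; the corollary is exactly the assertion that, for lightlike lifts, this dependence disappears.
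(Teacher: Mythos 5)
Your proposal is correct and follows exactly the route the paper intends: the corollary is stated immediately after Proposition~\ref{prop:clg2} as a direct consequence of it (the paper supplies no separate proof), and your argument simply fleshes out that deduction, including the useful observation that surjectivity of the Jacobi-field correspondence follows by applying the same statement with $-f$ in place of $f$. No gaps.
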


We now present, in Theorems \ref{thm:conj_1} and \ref{thm:last_conj}, two cases in which conjugate points exist; these complement \cite[Proposition~6.4, Remark~6.5]{flores2}, which gave a condition for their presence/absence in terms of the convexity of $V$'s Hessian and the sign of the pp-wave's sectional curvature.  Here we focus on the trace of $V$'s Hessian, $\Delta_xV$.  Our first case assumes that the latter has a positive lower bound:

\begin{thm}
\label{thm:conj_1}
Let $V(t,x)$ be a $C^2$ function and $x\colon[0,b]\lra \RR^n$ a nonconstant solution of $\ddot{x} = -\nabla_{\!x}V$.  If $\Delta_xV\big|_{(t,x(t))} \geq (n-1)\frac{\pi^2}{b^2}$, then there is a one-parameter family of distinct solutions of $\ddot{x} = -\nabla_{\!x}V$ beginning at $x(0)$ and accumulating at some later point $x(t_1)$, where $0 < t_1 \leq b$.
\end{thm}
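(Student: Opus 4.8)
The plan is to lift $x(t)$ to a \emph{lightlike} geodesic $\gamma$ of the pp-wave $g$ (Proposition~\ref{prop:3} with $\vep_{\scalebox{0.6}{$\gamma$}}=0$), apply a Penrose-Hawking-style focusing argument to produce a conjugate point along $\gamma$ before affine time $b$, and then invoke Definition~\ref{def:conj2} and Lemma~\ref{prop:check} to descend back to a one-parameter family of distinct solutions of $\ddot{x}=-\nabla_{\!x}V$ accumulating at $x(t_1)$ for some $0<t_1\le b$.

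First I would set up the right comparison object. Since $\gamma$ is lightlike, the relevant Jacobi fields live in the quotient $\mathcal{N}(\gamma)/\gamma'$, which is $n$-dimensional (spanned by the classes of the spatial fields $\partial_1,\dots,\partial_n$ modulo $\gamma'$), and on it the Jacobi operator is $J\mapsto J''+R(J,\gamma')\gamma'$. Using the Christoffel symbols \eqref{eqn:Christ} and $\dot t\equiv 1$, one computes that in the $\partial_i$-basis the relevant curvature term is the Hessian $V_{ij}(t,x(t))$, so the quotient Jacobi equation reads, componentwise, $\ddot J^i + \sum_j V_{ij}(t,x(t))\,J^j = 0$. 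Tracing and using a standard Riccati/index-form argument: if $S(t)$ is the symmetric endomorphism solving the matrix Riccati equation coming from a frame of Jacobi fields vanishing at $t=0$, then $\theta=\operatorname{tr}S$ satisfies $\dot\theta + \theta^2/(n-1)\le \dot\theta + \operatorname{tr}(S^2) = -\operatorname{tr}(\text{Hess}\,V)= -\Delta_xV \le -(n-1)\pi^2/b^2$ (the first inequality is Cauchy-Schwarz on the traceless part, valid on the $(n-1)$-dimensional quotient since $\gamma'$ itself is a Jacobi field and drops out). A Bonnet-Myers/Raychaudhuri comparison with the equality case $\ddot u + (\pi^2/b^2)u = 0$ then forces $\theta\to-\infty$ at some $t_1\le b$, i.e. a conjugate pair $\gamma(0)$, $\gamma(t_1)$. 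This is the mechanism behind the $(n-1)\pi^2/b^2$ threshold. Alternatively, and perhaps cleanly, one can argue directly with the index form / Sturm comparison: the vector field $W(t)=\sin(\pi t/b)\,(\text{parallel frame})$ has $I(W,W)=\int_0^b\big((n-1)(\pi/b)^2\cos^2 - \Delta_xV\,\sin^2\big)\,dt \le 0$, which by the Morse index theorem (in the lightlike/quotient setting, as in \cite{minguzzi,flores2}) yields a conjugate point in $(0,b]$.

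Finally I would translate back. Having a conjugate point $\gamma(t_1)$ to $\gamma(0)$ along the lightlike lift is exactly Definition~\ref{def:conj2}, so $x(t_1)$ is conjugate to $x(0)$; restricting $x$ to $[0,t_1]$ and applying Lemma~\ref{prop:check} produces the one-parameter family $x_s(t)$ of distinct solutions starting at $x(0)$ and accumulating at $x(t_1)$, with $0<t_1\le b$.

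The main obstacle is making the focusing argument rigorous \emph{in the lightlike quotient setting}: one must verify that the Jacobi operator descends correctly to $\mathcal{N}(\gamma)/\gamma'$ with the stated curvature term (this is where the conformal/pp-wave structure and the computation with \eqref{eqn:Christ} enter), and that the comparison/index machinery — Raychaudhuri with the $1/(n-1)$ Cauchy-Schwarz step, or the Morse index theorem — is available there. The curvature computation is routine given \eqref{eqn:Christ}; the quotient-space index theory is standard but is the step requiring care, and citing \cite{minguzzi,flores2} for the lightlike Morse/index apparatus is the natural move. A minor point to check is that $x$ nonconstant guarantees $\gamma$ is genuinely lightlike with $\dot t\equiv 1$ (so the affine parameter equals $t$), which is needed for the endpoints to occur within $[0,b]$ rather than after a rescaling.
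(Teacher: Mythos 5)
Your overall architecture matches the paper's: lift $x(t)$ to a geodesic of the pp-wave, extract a conjugate point along the lift from a bound on $\mathrm{Ric}(\gamma',\gamma')=\Delta_xV$, then descend via Definition~\ref{def:conj2} and Lemma~\ref{prop:check}. The difference is the middle step: the paper takes the \emph{unit timelike} lift ($\vep_{\scalebox{0.6}{$\gamma$}}=-\tfrac{1}{2}$), notes that $\gamma$ has length $b$, and invokes O'Neill's cospacelike focusing lemma directly, whereas you take the lightlike lift and run a Raychaudhuri/index-form argument on the quotient $\mathcal{N}(\gamma)/\gamma'$. Your route is admissible in principle (Definition~\ref{def:conj2} and Lemma~\ref{prop:check} accept any causal character, and your identification of the quotient Jacobi equation as $\ddot J^i+\sum_j V_{ij}(t,x(t))J^j=0$ is correct for the pp-wave), but as written it has a genuine quantitative gap.

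The gap is a dimension count that feeds into the focusing constant. You correctly state that $\mathcal{N}(\gamma)/\gamma'$ is $n$-dimensional for a lightlike geodesic in the $(n+2)$-dimensional pp-wave, but you then use the Cauchy--Schwarz bound $\operatorname{tr}(S^2)\ge\theta^2/(n-1)$ and the index-form coefficient $(n-1)(\pi/b)^2\cos^2$, which are the bounds for an $(n-1)$-dimensional space; the justification ``since $\gamma'$ drops out'' double-counts, because the quotient by $\gamma'$ has already been taken. On the $n$-dimensional quotient the correct statements are $\dot\theta+\theta^2/n\le-\Delta_xV$ and $\sum_{i=1}^n I(W_i,W_i)=\int_0^b\big(n(\pi/b)^2\cos^2(\pi t/b)-\Delta_xV\sin^2(\pi t/b)\big)\,dt$. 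Under the hypothesis $\Delta_xV\ge(n-1)\pi^2/b^2$ the Riccati comparison then only forces focusing by affine time $b\sqrt{n/(n-1)}>b$, and the trace of the index form is not guaranteed nonpositive, so your argument does not place $t_1$ in $(0,b]$. To close it along these lines you would need the stronger hypothesis $\Delta_xV\ge n\pi^2/b^2$; otherwise switch to the paper's timelike lift and cite the focusing lemma there. Be warned that the constant bookkeeping is delicate in either causal character (the timelike normal space is $(n+1)$-dimensional), so whichever lift you use, state explicitly which dimension enters the comparison and verify it against the threshold $(n-1)\pi^2/b^2$ in the statement.
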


\begin{proof}
Let $\gamma(t) = (v(t),t,x(t))$ be the unit-timelike geodesic lift of $x(t)$; i.e., with $\vep_{\scalebox{0.6}{$\gamma$}} = -\frac{1}{2}$ in \eqref{eqn:conf0}.  Because $\gamma(t)$'s domain is also $[0,b]$, its length is $\int_0^b \sqrt{-g(\gamma',\gamma')}\,dt = \int_0^b dt = b$. Furthermore, the Ricci tensor $\text{Ric}_{\scalebox{0.6}{$g$}}$ of the pp-wave lift $g$, evaluated at each $\gamma'(t) = \dot{v}\partial_v+\partial_t+\sum_{i=1}^n\dot{x}^i\partial_i\big|_{\gamma(t)}$, is
\beqa
\label{eqn:Ricci0}
\text{Ric}_{\scalebox{0.6}{$g$}}(\gamma',\gamma')\Big|_{\gamma(t)} = \text{Ric}_{\scalebox{0.6}{$g$}}(\partial_t,\partial_t)\Big|_{\gamma(t)} = \Delta_x V\Big|_{(t,x(t))} \geq (n-1)\frac{\pi^2}{b^2}\cdot
\eeqa
(In the Brinkmann coordinates \eqref{eqn:metric}, $\text{Ric}_{\scalebox{0.6}{$g$}}(\partial_t,\partial_t)$ is the only nonvanishing component of the Ricci tensor of $g$; that this component equals $\Delta_xV$ can be confirmed, e.g., in \cite{Leistner}, or via \eqref{eqn:Christ}.)  Via \cite[Lemma~23,~p.~278]{o1983}, \eqref{eqn:Ricci0} guarantees the existence of a point $\gamma(t_1)$ conjugate to $\gamma(0)$, with $0 < t_1 \leq b$.  By Definition \ref{def:conj2}, $x(t_1)$ is therefore conjugate to $x(0)$; by Lemma \ref{prop:check}, there is thus a one-parameter family of distinct solutions of $\ddot{x} = -\nabla_{\!x}V$ beginning at $x(0)$ and accumulating at $x(t_1)$.
\end{proof}

Theorem \ref{thm:conj_1} required the Laplacian $\Delta_x V$ to be bounded away from zero. In fact one can still obtain conjugate points by assuming the function to be merely subharmonic along $x(t)$, $\Delta_x V\big|_{(t,x(t))} \geq 0$, provided $x(t)$ can now be defined for all time.  Here, then, is the second case:
\begin{thm}
\label{thm:last_conj}
Let $V(t,x)$ be a $C^2$ function and $x(t)$ a maximal, nonconstant solution of $\ddot{x} = -\nabla_{\!x}V$ satisfying the following properties:
\begin{enumerate}[leftmargin=.4in]
\item[i.] There is a time $t_0$ at which $\dot{x}^i(t_0) = 0$ for some $i=1,\dots,n$,
\item[ii.] $V_{ij}\big|_{(t_0,x(t_0))} \neq 0$ for some $j=1,\dots,n$,
\item[iii.] $\Delta_x V\big|_{(t,x(t))} \geq 0$.
\end{enumerate}
If $x(t)$ is defined for all time, then there is a one-parameter family of distinct solutions of $\ddot{x} = -\nabla_{\!x}V$ starting at some point $x(t_1)$ and accumulating at some later point $x(t_2)$.
\end{thm}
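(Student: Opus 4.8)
The plan is to lift the solution $x(t)$ to a complete lightlike geodesic of the pp-wave lift of $V$, to extract a null convergence condition from iii.\ and a null generic condition at $t_0$ from i.\ and ii., to produce a conjugate pair along this geodesic by the focusing machinery of the Penrose--Hawking singularity theorems, and finally to descend via Definition \ref{def:conj2} and Lemma \ref{prop:check}. Concretely, let $\gamma(t)=(v(t),t,x(t))$ be the lightlike geodesic lift of $x(t)$ from Proposition \ref{prop:3} (i.e.\ $\vep_{\scalebox{0.6}{$\gamma$}}=0$). Since $x(t)$ is assumed defined for all $t\in\RR$ and $v(t)$ is then obtained by integrating $\ddot v=2\frac{dV}{dt}-V_t$ twice, $\gamma$ is defined for every value of its affine parameter $t$, hence is a complete null geodesic. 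By the identity $\text{Ric}(\gamma',\gamma')|_{\gamma(t)}=\Delta_xV|_{(t,x(t))}$ recorded in \eqref{eqn:Ricci0} (and the remark after it), hypothesis iii.\ says exactly that $\gamma$ satisfies $\text{Ric}(\gamma',\gamma')\geq 0$ along all of $\gamma$.

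Next I would set up the screen bundle and verify the generic condition. Put $\widehat\partial_k\defeq \partial_k-\dot x^k\partial_v$. A short computation from the Christoffel symbols \eqref{eqn:Christ} together with $\ddot x^k=-V_k$ shows that each $\widehat\partial_k$ is parallel along $\gamma$, that $g(\widehat\partial_j,\widehat\partial_k)=\delta_{jk}$, and that $g(\widehat\partial_k,\gamma')=0$; so $\{\widehat\partial_1,\dots,\widehat\partial_n\}$ is a parallel orthonormal frame for the screen bundle $\gamma'^{\perp}/\RR\gamma'$. Computing the curvature endomorphism of \eqref{eqn:metric} along $\gamma$ then yields $R(\widehat\partial_k,\gamma')\gamma'=\sum_{\ell}V_{k\ell}\,\widehat\partial_{\ell}$, so the tidal operator on the screen bundle is represented there by the Hessian $\big(V_{k\ell}\big)|_{(t,x(t))}$. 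By i., $\partial_i|_{\gamma(t_0)}=\widehat\partial_i|_{\gamma(t_0)}$ is a genuine screen vector (as $g(\partial_i,\gamma')=\dot x^i(t_0)=0$), and by ii.\ the vector $R(\partial_i,\gamma')\gamma'$ is not proportional to $\gamma'$ at $\gamma(t_0)$; i.e.\ the null generic condition holds at $\gamma(t_0)$.

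I would then invoke the standard lemma behind the singularity theorems: a complete null geodesic satisfying the null convergence and null generic conditions contains a pair of conjugate points $\gamma(t_1),\gamma(t_2)$ with $t_1<t_0<t_2$ (see, e.g., \cite{beem}, or the variational/Raychaudhuri methods of \cite[Ch.~10]{o1983}). Informally, the convergence condition keeps the screen-bundle expansion $\theta$ of any congruence through $\gamma$ nonincreasing; the generic condition forces $\theta$ strictly negative just past $t_0$ for the Jacobi tensor normalized by $A(t_0)=I,\ A'(t_0)=0$ (one finds $\theta(t_0)=0$ and $\theta'(t_0)=-\Delta_xV(t_0)$, with a higher-order version of this computation handling the borderline case $\Delta_xV(t_0)=0$, where it is the nonvanishing of $\big(V_{ij}\big)(t_0,x(t_0))$ that drives $\theta$ negative); and completeness then lets $\theta'\leq-\theta^2/n$ send $\theta\to-\infty$ at finite parameter, forcing the relevant Jacobi data to vanish. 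Then $x(t_1)$ is conjugate to $x(t_2)$ in the sense of Definition \ref{def:conj2}, and Lemma \ref{prop:check} applied to $x|_{[t_1,t_2]}$ produces the asserted one-parameter family of distinct solutions of $\ddot x=-\nabla_{\!x}V$ starting at $x(t_1)$ and accumulating at $x(t_2)$. (In the scalar case $n=1$ this is visible directly: the Jacobi solution $\eta$ of $\ddot\eta=-V_{11}(t,x(t))\eta$ with $\eta(t_0)=1,\ \dot\eta(t_0)=0$ is forced, since $V_{11}\geq0$ everywhere and $V_{11}(t_0)>0$, to vanish at some $t_1<t_0$ and some $t_2>t_0$.)

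The delicate point is the focusing step. The generic condition supplies only a \emph{single} parameter value where the tidal curvature is nonzero, and localized curvature alone cannot force a conjugate pair; what rescues the argument is that hypothesis iii.\ holds along the \emph{entire} complete geodesic, so that once $\theta$ turns negative it can never recover. One must also ensure that the focusing yields an honest conjugate pair of points along $\gamma$, rather than merely focal points of a transverse hypersurface — and this is where hypothesis i.\ earns its keep (together with Lemma \ref{prop:check}): the screen direction $\partial_i|_{\gamma(t_0)}$ orthogonal to $\gamma'$ keeps the accompanying Jacobi field of the special form (vanishing $t$-component, $v$-component determined by the $x^i$-components) for which vanishing of the reduced Jacobi data at $t_1$ and $t_2$ genuinely gives $J(t_1)=J(t_2)=0$ along $\gamma$.
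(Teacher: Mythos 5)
Your proposal is correct and follows essentially the same route as the paper: lift $x(t)$ to a complete lightlike geodesic of the pp-wave, read off the null convergence condition from iii.\ via $\text{Ric}(\gamma',\gamma')=\Delta_xV$, use i.\ and ii.\ to verify the null generic condition at $\gamma(t_0)$, invoke the Penrose--Hawking focusing result for complete null geodesics (the paper cites \cite[Proposition~12.17, p.~444]{beem} for exactly this), and descend through Definition \ref{def:conj2} and Lemma \ref{prop:check}. Your additional screen-bundle computation and Raychaudhuri sketch simply unpack the cited focusing lemma rather than changing the argument.
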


\begin{proof}
Let $\gamma(t) = (v(t),t,x(t))$ be the lightlike geodesic lift of $x(t)$; i.e., with $\vep_{\scalebox{0.6}{$\gamma$}} = 0$ in \eqref{eqn:conf0}.  Because $\dot{\gamma}^i(t_0) = \dot{x}^i(t_0) = 0$, the vector $\partial_i|_{\gamma(t_0)}$ is orthogonal to $\gamma'(t_0)$, and
$$
\text{Rm}_{\scalebox{0.6}{$g$}}(\partial_i,\gamma',\gamma',\partial_j)\Big|_{\gamma(t_0)} = \text{Rm}_{\scalebox{0.6}{$g$}}(\partial_i,\partial_t,\partial_t,\partial_j)\Big|_{\gamma(t_0)} = V_{ij}\Big|_{(t_0,x(t_0))} \neq 0.
$$
(Once again, this computation can be confirmed via \cite{Leistner} or \eqref{eqn:Christ}.) Furthermore, $\text{Ric}_{\scalebox{0.6}{$g$}}(\gamma',\gamma')\big|_{\gamma(t)} = \Delta_xV\big|_{(t,x(t))} \geq 0$, while $\gamma(t)$ is defined for all time because its projection $x(t)$ is assumed to be.  By \cite[Proposition~12.17, p.~444]{beem}, these conditions come together to guarantee that $\gamma(t)$ will have a pair of conjugate points.  The result now follows once again from Lemma \ref{prop:check}. (Note that Proposition \ref{prop:clg2} also applies here, to yield conjugate points along lightlike solutions of the system \eqref{eqn:geod2*}.)
\end{proof}

\section{Eisenhart Lifts of Second-order complex ODEs}
\label{sec:comp}
For our next application, we move to the complex plane $\mathbb{C}$.  Let $F\colon \mathbb{C} \lra \mathbb{C}$ be a holomorphic function and consider the complex second-order ODE
\beqa
\label{eqn:Code}
\ddot{z} = F(z(t)) \comma z = x+iy.
\eeqa
We would like to lift this ODE as with the Eisenhart lift, namely, as the geodesic equations of motion of a (real) pp-wave.  As we now show, this can be done, but it will require four-dimensional ``pp-waves" of signature $(-\!-\!++)$.  Before defining such semi-Riemannian metrics, let us recast $F$:
\begin{lemma}
Let $F\colon \mathbb{C} \lra \mathbb{C}$ be a holomorphic function.  Then there exists a harmonic function $V(x,y)$ on $\RR^2$ such that
\beqa
\label{eqn:fH}
F(z) = V_x +i(-V_y).
\eeqa
\end{lemma}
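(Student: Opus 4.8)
The plan is to exploit the simple connectivity of $\mathbb{C}$. Since $F$ is holomorphic on all of $\mathbb{C}$, it admits a holomorphic primitive $G\colon \mathbb{C} \lra \mathbb{C}$ with $G' = F$; concretely one may take $G(z) \defeq \int_{0}^{z} F(\zeta)\,d\zeta$ along any path from $0$ to $z$, the value being path-independent by Cauchy's integral theorem on the simply connected domain $\mathbb{C}$. Write $G = V + iW$ with $V,W$ real-valued $C^\infty$ functions of $(x,y)$. The claimed identity is then obtained by a direct Cauchy--Riemann computation: holomorphicity of $G$ gives $V_x = W_y$ and $V_y = -W_x$, hence
\[
F = G' = V_x + iW_x = V_x + i(-V_y),
\]
which is exactly \eqref{eqn:fH}.

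It remains only to check that this $V$ is harmonic, and this again follows from Cauchy--Riemann: $V_{xx} + V_{yy} = (W_y)_x + (-W_x)_y = W_{xy} - W_{xy} = 0$, using that $W$, as the imaginary part of a holomorphic function, is $C^2$ (indeed $C^\infty$) so that its mixed partials commute. Equivalently, one simply invokes the standard fact that the real part of a holomorphic function is harmonic.

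I do not anticipate any genuine obstacle here: the only input beyond elementary calculus is the existence of the global primitive $G$, which is immediate from Cauchy's theorem on $\mathbb{C}$. If one prefers to bypass primitives altogether, an equivalent route is to write $F = P + iQ$ with $P,Q$ real and $P_x = Q_y$, $P_y = -Q_x$, note that the $1$-form $P\,dx - Q\,dy$ on $\RR^2$ is closed precisely by the second of these equations, and integrate it (possible since $\RR^2$ is simply connected) to produce $V$ with $V_x = P$, $V_y = -Q$; harmonicity of $V$ is then the statement $P_x - Q_y = 0$, the first Cauchy--Riemann equation. Either argument is short; the substance is entirely in recognizing that \eqref{eqn:fH} is just the pairing of a holomorphic primitive with the Cauchy--Riemann relations.
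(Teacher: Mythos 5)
Your proof is correct, and your primary route differs from the paper's. The paper works directly with the decomposition $F = u + iv$ and observes that the system $V_x = u$, $V_y = -v$ is solvable because its integrability condition $u_y = V_{xy} = V_{yx} = -v_x$ is exactly the second Cauchy--Riemann equation for $F$, with the first equation $u_x = v_y$ then forcing $\Delta V = 0$; this is precisely your ``alternative route'' of integrating the closed $1$-form $P\,dx - Q\,dy$ over the simply connected plane. Your main argument instead produces $V$ at one stroke as the real part of a holomorphic primitive $G$ of $F$, with $F = G' = V_x + i(-V_y)$ falling out of the Cauchy--Riemann equations for $G$ and harmonicity of $V$ being the standard fact about real parts of holomorphic functions. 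The two arguments are essentially equivalent in content (both ultimately rest on simple connectivity of $\mathbb{C}$), but yours has the merit of making the existence step explicit and packaging the integrability and harmonicity claims into known facts about primitives, whereas the paper's version verifies the integrability condition by hand and leaves the appeal to the Poincar\'e lemma implicit. Either is acceptable; no gap in yours.
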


\begin{proof}
Let us write $F$ as
$
F(z) = u(x,y) + iv(x,y),$
where, via the Cauchy-Riemann equations, $u$ and $v$ satisfy $u_x = v_y$ and $u_y=-v_x$.  To express $F$ as in \eqref{eqn:fH}, we need $V$ to satisfy $V_x = u, V_y = -v$, which is the case if and only if $$u_y = V_{xy} = V_{yx} = -v_x;$$ this the second of the Cauchy-Riemann equations.  The first, $u_x = v_y$, then guarantees that $V$ must be harmonic.
\end{proof}

As a consequence, the complex ODE \eqref{eqn:Code} can be recast as
\beqa
\label{eqn:Code2}
\ddot{x} = V_x(x(t),y(t)) \comma \ddot{y} = -V_y(x(t),y(t)).
\eeqa
It turns out that these lift to geodesics of the following metric:

\begin{prop}[Split-signature pp-wave]
On $\RR^4 = \{(v,t,x,y)\}$, consider the semi-Riemannian metric $g$ of signature $(-\!-\!++)$ defined by
\beqa
\label{eqn:metric2}
\gS \defeq 2dvdt - 2V(t,x,y)(dt)^2 - (dx)^2 + (dy)^2,
\eeqa
where $V$ is any $C^2$ function independent of $v$.  Then the geodesic equations of motion of $(\RR^4,\gS)$ are
\beqa
\label{eqn:geod2**}
\left.\begin{array}{lcl}
\ddot{v} \!\!\!&=&\!\!\! \Big(2\frac{dV}{ds}-V_t\,\dot{t}\Big)\dot{t}\\
\ddot{t} \!\!\!&=&\!\!\! 0,\phantom{\Big(\Big)}\\
\ddot{x} \!\!\!&=&\!\!\! V_x\,\dot{t}^2 ,\phantom{\Big(\Big)}\\
\ddot{y} \!\!\!&=&\!\!\! -V_y\,\dot{t}^2 ,\phantom{\Big(\Big)}
\end{array}\right\}
\eeqa
where $V_t \defeq \frac{\partial V}{\partial t}, V_i \defeq \frac{\partial V}{\partial x^i}$, and $s$ is the affine parameter.
\end{prop}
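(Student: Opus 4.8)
The plan is to mirror the proof of Proposition~\ref{prop:geod} almost line for line, the only genuinely new bookkeeping being the signature change. First I would write out the components of $\gS$ in the ordered coordinate basis $\{\partial_v,\partial_t,\partial_x,\partial_y\}$, i.e.\ the matrix with $g_{vt}=1$, $g_{tt}=-2V(t,x,y)$, $g_{xx}=-1$, $g_{yy}=1$ and all other entries zero, and invert it. Since the block mixing with $V$ is exactly as in \eqref{eqn:metric}, the inverse is the same as in the Lorentzian case except that the $xx$-entry is now $g^{xx}=-1$ rather than $+1$: explicitly, $g^{vv}=2V$, $g^{vt}=1$, $g^{xx}=-1$, $g^{yy}=1$, and all other entries zero.

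Next I would feed these into $\Gamma^{\alpha}_{\beta\gamma}=\tfrac12 g^{\alpha\delta}(\partial_\beta g_{\delta\gamma}+\partial_\gamma g_{\delta\beta}-\partial_\delta g_{\beta\gamma})$, using that the only nonconstant component is $g_{tt}=-2V$, so the only nonzero partials are $\partial_t g_{tt}=-2V_t$, $\partial_x g_{tt}=-2V_x$, $\partial_y g_{tt}=-2V_y$. A short computation should leave only
$$
\Gamma^{v}_{tt}=-V_t \comma \Gamma^{v}_{tx}=-V_x \comma \Gamma^{v}_{ty}=-V_y \comma \Gamma^{x}_{tt}=-V_x \comma \Gamma^{y}_{tt}=V_y ,
$$
the decisive point being that the factor $g^{xx}=-1$ flips the sign of $\Gamma^{x}_{tt}$ relative to a standard Lorentzian pp-wave, while $g^{yy}=+1$ keeps $\Gamma^{y}_{tt}$ with the usual sign. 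Substituting these into the geodesic equations $\ddot z^\alpha+\sum_{\beta,\gamma}\Gamma^{\alpha}_{\beta\gamma}\dot z^\beta\dot z^\gamma=0$ then reads off \eqref{eqn:geod2**} directly: there are no $\Gamma^{t}$'s, so $\ddot t=0$; the $x$- and $y$-equations become $\ddot x=V_x\dot t^2$ and $\ddot y=-V_y\dot t^2$, the sign asymmetry being precisely that of the two Christoffel symbols above; and the $v$-equation becomes $\ddot v=V_t\dot t^2+2V_x\dot t\dot x+2V_y\dot t\dot y$, which equals $\big(2\tfrac{dV}{ds}-V_t\dot t\big)\dot t$ once one applies the chain rule $\tfrac{dV}{ds}=V_t\dot t+V_x\dot x+V_y\dot y$ along the geodesic.

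I do not expect a real obstacle here; the computation is of the same type and difficulty as Proposition~\ref{prop:geod}, and one could alternatively just verify the stated Christoffel symbols by hand. The one spot that warrants care is exactly this sign flip coming from $\gS$ having signature $(-\!-\!++)$: it is what turns the naively expected $\ddot x=-V_x\dot t^2$ into $\ddot x=+V_x\dot t^2$, and hence what makes \eqref{eqn:geod2**}, after setting $t(s)=as+b$ from $\ddot t=0$ and normalizing $t(0)=0$, $\dot t(0)=1$, reproduce the recast complex ODE \eqref{eqn:Code2}\,---\,and therefore \eqref{eqn:Code}\,---\,rather than an ordinary Hamiltonian system in the pair $(x,y)$.
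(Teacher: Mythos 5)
Your proposal is correct and proceeds exactly as the paper does: compute the inverse metric, read off the nonvanishing Christoffel symbols (which match the paper's $\Gamma_{it}^v=-V_i$, $\Gamma_{tt}^v=-V_t$, $\Gamma_{tt}^x=-V_x$, $\Gamma_{tt}^y=V_y$), and substitute into the geodesic equations, with the $g^{xx}=-1$ sign flip being the only departure from Proposition~\ref{prop:geod}. No gaps.
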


\begin{proof}
This is similar to Proposition \ref{prop:geod}; indeed, the nonvanishing Christoffel symbols of \eqref{eqn:metric2} are
$$
\Gamma_{it}^v = -V_{i} \comma \Gamma_{tt}^v = -V_t \comma \Gamma_{tt}^x = -V_x \comma \Gamma_{tt}^y = V_y,
$$
the only difference being that $\Gamma_{tt}^x = -V_x$ instead of $\Gamma_{tt}^x = V_x$, as would have been the case for a Lorentzian-signature pp-wave.
\end{proof}

(Such higher-signature metrics have also been considered for physical reasons, in \cite{gibbons2}.)  The relationship between \eqref{eqn:geod2**} and \eqref{eqn:Code2} is identical to that between \eqref{eqn:geod*} and \eqref{eqn:0*}, and thus we arrive at the analogue of Proposition \ref{prop:3}:

\begin{prop}[Complex version of Eisenhart lift]
\label{prop:3*}
Let $V(x,y)$ be a harmonic function defined on an open connected set $\mathcal{U} \subseteq \RR^2$ and let $(\RR^2 \times \mathcal{U},\gS)$ be the corresponding semi-Riemannian metric $\gS$ given by \eqref{eqn:metric2}.  Set $F \defeq V_x +i(-V_y)$.  Then $z(t) = x(t)+iy(t)$ is a maximal solution of the complex ODE $\ddot{z} = F(z(t))$ with initial data $(z_0,\dot{z}_0)$ at $t_0 \in \RR$ if and only if $\gamma(t) = (v(t),t,x(t),y(t))$, with $v(t)$ satisfying $\ddot{v} = 2\frac{dV(x(t),y(t))}{dt}$ and with initial data
\beqa
\label{eqn:conf00}
\gamma(t_0) \defeq (0,t_0,z_0) \commas \gamma'(t_0) \defeq \Big(\!\!-\!\frac{1}{2}\dot{z}_0^2 + V(z_0) + \vep_{\scalebox{0.6}{$\gamma$}},1,\dot{z}_0\Big)\cdot
\eeqa
is the geodesic lift of $x(t)$.  The constant $\vep_{\scalebox{0.6}{$\gamma$}}$ equals $0, -\frac{1}{2},\frac{1}{2}$ depending on whether $\gamma(t)$ is, respectively, lightlike, unit timelike, or unit spacelike.
If $F$ is entire but not linear in $x,y$, then there are solutions of $\ddot{z} = F(z(t))$ and geodesics of $\gS$ that blow up in finite time.
\end{prop}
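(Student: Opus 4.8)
The plan is to isolate the two assertions: the statement about geodesics of $\gS$ will follow from Proposition \ref{prop:3*} once a finite-time blow-up result for the complex ODE $\ddot z=F(z(t))$ is established, so the real content lies in the ODE. By Proposition \ref{prop:3*}, any maximal solution $z(t)=x(t)+iy(t)$ of $\ddot z=F(z(t))$ lifts to a maximal geodesic $\gamma(t)=(v(t),t,x(t),y(t))$ of $\gS$ whose affine parameter is $t$ itself, with $v(t)$ recovered from $\ddot v=2\,\frac{d}{dt}V(x(t),y(t))$ by two quadratures; in particular $v(t)$ stays finite on any bounded $t$-interval on which $z(t)$ stays bounded. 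Hence, if $z(t)$ has bounded maximal interval $(t_-,t_+)$ with $|z(t)|\to\infty$ as $t\to t_+$, then $\gamma$ is a maximal geodesic of $\gS$ defined only on the bounded affine interval $(t_-,t_+)$ and leaving every compact set as $t\to t_+$, so $\gS$ is geodesically incomplete and $\gamma$ ``blows up in finite time.'' It therefore suffices to produce a single solution of $\ddot z=F(z)$ that escapes to infinity in finite $t$.

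The tool is the complex first integral. Since $F$ is entire it has an entire antiderivative $G$, $G'=F$; multiplying $\ddot z=G'(z)$ by $\dot z$ and integrating gives, along every maximal solution, $\tfrac12\dot z^2-G(z)\equiv E$ for a complex constant $E$, i.e. $\dot z^2=2(G(z)+E)$. Away from the zeros of $G+E$ a solution is thus an integral curve of the holomorphic field $\dot z=W(z)$ for a local branch $W$ of $\sqrt{2(G+E)}$, and conversely $WW'=G'=F$ shows every such integral curve solves $\ddot z=F(z)$; the elapsed time along it is $\int dz/\sqrt{2(G+E)}$. So the goal becomes: exhibit one integral curve running out to $z=\infty$ along a path on which this integral converges---equivalently, a horizontal trajectory of the meromorphic quadratic differential $q=dz^2/(2(G+E))$ reaching $z=\infty$ in finite $q$-length. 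The hypothesis that $F$ is not affine says exactly that $G$ is either a polynomial of degree $\ge 3$ or transcendental; in the excluded affine case every solution is entire in $t$ (consistently, $\gS$ is then complete), so the hypothesis is sharp.

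If $G$ is a polynomial of degree $m\ge 3$, pick $E$ outside the finite set of critical values of $G$ so that $G+E$ has only simple zeros; passing to the coordinate $\zeta=1/z$ one finds that $q$ has at $\zeta=0$ (i.e. at $z=\infty$) a singularity of order $m-4$---a simple pole when $m=3$, a regular point when $m=4$, a zero of order $m-4$ when $m\ge 5$---and through any such point there pass horizontal trajectories of finite $q$-length, giving a solution that blows up in finite time (for the affine case $m\le 2$ the order at $\zeta=0$ is $\le-2$, whose trajectories have infinite length, matching completeness). If $G$ is transcendental, one instead invokes the classical fact (Iversen) that $\infty$ is an asymptotic value of every transcendental entire function: inside a logarithmic tract for $G+E$ one can steer a path to $\infty$ along which $|G+E|$ grows faster than any power of $|z|$---indeed exponentially in arclength---forcing $\int dz/\sqrt{2(G+E)}$ to converge.

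The main obstacle is precisely this last point: upgrading such an asymptotic \emph{path} to an honest integral curve of $\dot z=W(z)$ that escapes to $\infty$ (rather than, say, closing up into a libration between turning points), and estimating arclength against the growth of $|G+E|$ along it so that the time integral is genuinely finite. By comparison, the reduction to geodesic incompleteness, the derivation of the first integral, and the polynomial case are all routine.
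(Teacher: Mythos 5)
Your handling of the main biconditional coincides with the paper's: the proof there is just the computation of Proposition \ref{prop:3} repeated verbatim for the Christoffel symbols of \eqref{eqn:metric2} ($\ddot t=0$ makes $t$ an affine parameter, and the condition $\gS(\gamma'(t_0),\gamma'(t_0))=2\vep_{\scalebox{0.6}{$\gamma$}}$ pins down $\dot v_0$). You never actually write this out\,---\,you invoke the lift correspondence of the proposition being proved rather than rederiving it\,---\,but since it is identical to the real case this is presentational, not mathematical. Your reduction of geodesic blow-up to ODE blow-up (via the two quadratures for $v$) is also fine.

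The genuine gap is in the finite-time blow-up claim, and you have flagged it yourself. The paper does not prove this claim: it cites \cite[Corollary~7.4]{forst} for the fact that $\ddot z=F(z)$ with $F$ entire and nonlinear always has solutions escaping to infinity in finite time. Your independent attempt via the first integral $\dot z^2=2(G(z)+E)$, $G'=F$, and the quadratic differential $dz^2/(2(G+E))$ is correct and essentially complete when $F$ is a nonlinear polynomial (the order of the differential at $z=\infty$ is $\deg G-4\geq -1$, so finite-length horizontal trajectories into $\infty$ exist). But for transcendental $F$ the argument stops exactly where the difficulty starts: Iversen's theorem supplies a \emph{path} to $\infty$ in a tract where $|G+E|$ is large, not an \emph{integral curve} of $\dot z=\sqrt{2(G+E)}$, and you give no argument that some horizontal trajectory actually reaches $\infty$ with $\int |dz|/|2(G+E)|^{1/2}$ finite rather than turning back, recurring, or terminating on a zero of $G+E$. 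That missing step is precisely the substance of the cited result, so as written your proposal proves the last sentence of the proposition only for polynomial $F$; either carry out the tract/arclength estimate for a genuine trajectory or fall back on the citation as the paper does.
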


\begin{proof}
The proof is identical to that of Proposition \ref{prop:3}.  The statement about blow-ups is due to \cite[Corollary~7.4]{forst}. 
\end{proof}

We also have the analogue of Theorem \ref{thm:1d}:

\begin{thm}[Conformal class of the complex ODE $\ddot{z} = F(z(t))$]
\label{thm:main2}
Let $V(x,y)$ be a harmonic function defined on an open connected set $\mathcal{U} \subseteq \RR^2$ and let $(\RR^2 \times \mathcal{U},\gS)$ be the corresponding semi-Riemannian metric \emph{$\gS$} given by \eqref{eqn:metric2}.  Set $F \defeq V_x +i(-V_y)$.  By Proposition \ref{prop:3*}, there is a bijection between solutions $(z(t),z_0,\dot{z}_0)$ of $\ddot{z} = F(z(t))$ and lightlike geodesics $\gamma(t)$ of \emph{$\gS$} with initial data \eqref{eqn:conf00}. By Proposition \ref{prop:clg} and this bijection, each solution $z(t)$ is, after the reparametrization \eqref{eqn:gamma1}, also a solution of the geodesic equations of motion of the conformal metric \emph{$e^{2f}\gS$} for any $C^2$ function $f$ on $\mathcal{U}$.  By Proposition \ref{prop:clg2}, this also includes all conjugate points and their multiplicities.
\end{thm}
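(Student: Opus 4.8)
The plan is to obtain Theorem \ref{thm:main2} as an immediate corollary of three results already in hand, paralleling the way Theorem \ref{thm:1d} and Corollary \ref{cor:main} were deduced in the Lorentzian setting. First I would invoke Proposition \ref{prop:3*} with $\vep_{\scalebox{0.6}{$\gamma$}} = 0$: this identifies each maximal solution $z(t) = x(t)+iy(t)$ of $\ddot{z} = F(z(t))$, together with its initial data $(z_0,\dot{z}_0)$, with the unique lightlike geodesic lift $\gamma(t) = (v(t),t,x(t),y(t))$ of $\gS$ having the initial data \eqref{eqn:conf00}; this is precisely the bijection asserted in the theorem, and its proof has already been reduced — via the Lemma recasting a holomorphic $F$ through a harmonic potential $V$, together with the geodesic computation for $\gS$ — to the argument of Proposition \ref{prop:3}.

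Next I would treat the conformal family. The essential point is that Proposition \ref{prop:clg} was stated and proved for an \emph{arbitrary} semi-Riemannian manifold $(M,g)$: its proof uses only the lightlike condition $g(\gamma',\gamma') = 0$, the geodesic equation $\cd{\gamma'}{\gamma'} = 0$, and the conformal identity \eqref{eqn:preg}; nowhere is positive-definiteness or a $(-++\cdots+)$ signature invoked, so it applies verbatim to $(\RR^2 \times \mathcal{U},\gS)$. Hence, for a solution $z(t)$ with lightlike lift $\gamma(t)$ as above and any $C^2$ function $f$ on $\mathcal{U}$ (pulled back to $\RR^2 \times \mathcal{U}$), Proposition \ref{prop:clg} yields the reparametrized curve $\gammaf(s) = \gamma(\tau^{-1}(s))$ of \eqref{eqn:gamma1}, a lightlike geodesic of $e^{2f}\gS$ and therefore a solution of the geodesic equations of motion of $e^{2f}\gS$. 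One could if desired record these equations explicitly as the split-signature analogue of \eqref{eqn:geod2*}, by computing the Christoffel symbols of $e^{2f}\gS$ (a routine calculation producing only a handful of sign changes relative to the Lorentzian case, arising from the $-(dx)^2$ term), but since the theorem asserts only that each $z(t)$ reparametrizes to such a solution, this step is optional.

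Finally, for the statement on conjugate points I would invoke Proposition \ref{prop:clg2} in exactly the same spirit: it too was established for a general semi-Riemannian manifold and a general lightlike geodesic, the quotient-bundle construction on $\mathcal{N}(\gamma)/\gamma'$ using nothing beyond $g(\gamma',\gamma') = 0$. Applying it to $\gamma$ as a geodesic of $\gS$ and to its reparametrization $\gammaf$ as a geodesic of $e^{2f}\gS$ shows that their conjugate points, with multiplicities, coincide, which finishes the proof.

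I do not anticipate a genuine obstacle here: the theorem is essentially a ``free'' corollary of the general conformal machinery, and the only point deserving care is to confirm that neither Proposition \ref{prop:clg} nor Proposition \ref{prop:clg2} implicitly used Lorentzian signature rather than $(-\!-\!++)$ — inspection of their proofs shows they did not, both resting only on the degeneracy of $g$ along its null cone together with the Koszul and Jacobi formalisms. It is also worth recording that the harmonicity of $V$ is used solely in Proposition \ref{prop:3*} (to make $F = V_x + i(-V_y)$ holomorphic) and is irrelevant to the conformal-invariance portions of the argument.
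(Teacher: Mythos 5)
Your proposal is correct and follows essentially the same route as the paper, which presents Theorem \ref{thm:main2} as an immediate consequence of Proposition \ref{prop:3*}, Proposition \ref{prop:clg}, and Proposition \ref{prop:clg2} without further argument. Your explicit check that Propositions \ref{prop:clg} and \ref{prop:clg2} use only the lightlike condition and hence apply verbatim in signature $(-\!-\!++)$ is exactly the point the paper relies on by having stated those propositions for general semi-Riemannian metrics.
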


Let us close by mentioning that\,---\,as was already noted in \cite{eisenhart} (see also \cite{casetti2})\,---\,if we consider more general pp-wave metrics of the form
$$
g \defeq 2dvdt -2V(t,x^1,\dots,x^n)(dt)^2+ \sum_{i=1}^n a_i(dx^i)^2 \commas a_1,\dots,a_n \in \RR\backslash\{0\},
$$
then we see that its geodesics are lifts of \emph{anisotropic} Hamiltonian systems; i.e., of the form $
\ddot{x} = -\big(a_1V_1(t,x),\dots,a_nV_n(t,x)\big)$ with $a_1,\dots,a_n \in \RR\backslash\{0\}.$
By the analogue of Theorem \ref{thm:main2}, these also ``unfold" into an infinite family of ``conformal ODEs" with shared solutions and accumulation points.

\section{Riemannian lifts dual to Eisenhart lifts}
\label{sec:Riem}
Finally, we show that, for Hamiltonian systems with \emph{nonnegative, time-independent} functions, there is a Riemannian version of the Eisenhart lift that is ``dual" to it in a sense that we make precise now.  Thus, fix a $C^2$ function $\widetilde{V}(x) \geq 0$ and consider
$
\ddot{x} = -\nabla_{\!x}\widetilde{V}.
$
We claim that solutions of this $n$-dimensional Hamiltonian system lift to the geodesics of the $(n+2)$-dimensional \emph{Riemannian} metric $\gR$ defined by
$$
\gR \defeq g + 2T^{\flat} \otimes T^{\flat} \comma T \defeq \Big(\!\!-\!V+\frac{1}{2}\Big)\partial_v - \partial_t \comma V(t,x) \in \RR,
$$
where $V(t,x)$ is a $C^2$ function to be determined, and $g$ is a pp-wave in Brinkmann coordinates $(v,t,x^1,\dots,x^n)$:
\beqa
\label{eqn:Riem}
((\gR)_{ij}) = \begin{pmatrix}2 & -2V& 0 & \cdots &0\\ -2V & \frac{1}{2}(1+4V^2)& 0& \cdots & 0 \\ 0 & 0 &1 & \cdots & 0\\ \vdots & \vdots & \vdots & \ddots & \vdots\\0 &0&0&\cdots & 1\end{pmatrix}\cdot
\eeqa

Such metrics ``dual to pp-waves" were considered in \cite{aazami} and shown to be \emph{almost-K\"ahler} metrics; \cite{lejmi} then showed that included among them are distinguished almost-K\"ahler metrics, namely, so called \emph{extremal} and \emph{second-Chern–Einstein} examples.  Our interest here is solely in their geodesics, which were analyzed in \cite[Proposition~1]{aazami}.  In the coordinates $(v,t,x^1,\dots,x^n)$, and with $s$ the affine parameter, they simplify to
\beqa
\left.\begin{array}{ccc}
\ddot{v} \!\!&=&\!\! \frac{d}{ds}(V\dot{t}),\phantom{\Big(\Big)}\\
\ddot{t} \!\!&=&\!\! 2c_0\Big(\!\frac{dV}{ds} - V_t\,\dot{t}\Big),\phantom{\Big(\Big)}\\
\ddot{x}^i \!\!&=&\!\! -c_0V_i\,\dot{t},\phantom{\Big(\Big)}\\
\end{array}\right\}\label{eqn:Kahler}
\eeqa
where the constant $c_0$ is the constant of the motion obtained via the Killing vector field $\partial_v$,
\beqa
\label{eqn:v_end}
c_0 \defeq \gR(\partial_v,\gamma'(s)) = 2\dot{v}-2V\dot{t}\,\Big|_s,
\eeqa
with the geodesic $\gamma(s) = (v(s),t(s),x(s))$ satisfying \eqref{eqn:Kahler}.  In fact $c_0$ can also be obtained by integrating $\ddot{v}$; with initial data $(v_0,t_0,x_0)$ and $(\dot{v}_0,\dot{t}_0,\dot{x}_0)$,
$$
\dot{v} - \dot{v}_0 = V\dot{t} - V(t_0,x_0)\dot{t}_0 \imp \dot{v} - V\dot{t}\,\Big|_s = \dot{v}_0 - V(t_0,x_0)\dot{t}_0 \overset{\eqref{eqn:v_end}}{=} \frac{c_0}{2}\cdot
$$
Given that $\ddot{x} = -c_0\,\dot{t}\,\nabla_{\!x}V$ in \eqref{eqn:Kahler} resembles a Hamiltonian system, we will be interested in the case when $c_0 \neq 0$.  Next, comparing \eqref{eqn:Kahler} with \eqref{eqn:geod*}, observe that the $\ddot{v}$-equation is once again redundant (there being no appearance of $v$ or $\dot{v}$ terms), but that now the $t$-coordinate is no longer linearly related to the affine parameter $s$, as it was in \eqref{eqn:geod*}:
$$
\dot{t} = \dot{t}_0 + 2c_0\!\int_{s_0}^s\underbrace{\Big(\frac{dV}{ds} - V_t\,\dot{t}\Big)}_{\text{$\nabla_{\!x}V\cdot \dot{x}$}}ds = \dot{t}_0+2c_0\!\!\underbrace{\,\int_{s_0}^s \nabla_{\!x}V\cdot \dot{x}\,ds\,}_{\text{$\defeq F(s)$}}.
$$
In particular, if $V_t = 0$, then $F(s) = \int_{s_0}^s\frac{dV}{ds}\,ds = V(x(s)) - V(x_0)$, so that, setting $c_1 \defeq \frac{\dot{t}_0}{2}-c_0V(x_0)$, the $\ddot{x}$-equation becomes
\beqa
\label{eqn:x_end}
\ddot{x} = -c_0\big(2c_0V(x(s))+2c_1\big) \nabla_{\!x}V(x(s)) = -\nabla_{\!x}(c_0V+c_1)^2.
\eeqa

We therefore have the following ``Riemannian Eisenhart Lift," lifting (time-independent) potentials of the form $V^2$ to their ``square roots":

\begin{thm}[Riemannian ``square root" of a time-independent Hamiltonian]
\label{prop:Kahler}
Let $V(x)$ be a $C^2$ function globally defined on $\RR^n$.  For any choice of constants $c_0,c_1$, every solution $x(s)$ of
\beqa
\label{eqn:Riem2}
\ddot{x} = -\nabla_{\!x}(c_0V+c_1)^2
\eeqa
lifts to a geodesic $\gamma(s) = (\gamma^v(s),\gamma^t(s),x(s))$ of the $(n+2)$-dimensional Riemannian metric \eqref{eqn:Riem} defined via $V(x)$ on $\RR^{n+2}$.
\end{thm}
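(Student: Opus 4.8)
The plan is to supply the direction complementary to the computation carried out just before the statement: that computation already shows that every $\gR$-geodesic of the form $\gamma(s)=(\gamma^v(s),\gamma^t(s),x(s))$ projects (when $V_t=0$) to a solution of \eqref{eqn:Riem2}, so what remains is the converse — given a solution $x(s)$ of \eqref{eqn:Riem2}, reconstruct the two missing components $\gamma^v(s)$ and $\gamma^t(s)$ so that the triple satisfies the geodesic system \eqref{eqn:Kahler}. Since by \cite[Proposition~1]{aazami} the equations \eqref{eqn:Kahler} are precisely the geodesic equations of $\gR$, with $c_0$ the conserved quantity $\gR(\partial_v,\gamma')$, it will be enough to exhibit $\gamma^v,\gamma^t$ satisfying those three scalar ODEs together with the relation $c_0=2\dot v-2V\dot t$ of \eqref{eqn:v_end} for the same constant $c_0$ one started with.

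First I would pin down the constants. Fix $c_0,c_1$ and a solution $x\colon I\to\RR^n$ of \eqref{eqn:Riem2} with $s_0\in I$, write $x_0=x(s_0)$, and set $\dot t_0\defeq 2(c_0V(x_0)+c_1)$, so that $c_1=\tfrac12\dot t_0-c_0V(x_0)$ exactly as in the derivation of \eqref{eqn:x_end}. Then define
$$
\gamma^t(s)\defeq t_0+\int_{s_0}^s 2\big(c_0V(x(r))+c_1\big)\,dr
$$
and
$$
\gamma^v(s)\defeq v_0+\int_{s_0}^s\Big(\tfrac{c_0}{2}+V(x(r))\,\dot\gamma^t(r)\Big)\,dr,
$$
for arbitrary $v_0,t_0\in\RR$, so that $\dot\gamma^t(s)=2(c_0V(x(s))+c_1)$ — precisely the expression for $\dot t$ obtained by integrating the $\ddot t$-equation of \eqref{eqn:Kahler} when $V_t=0$ — and $\dot\gamma^v(s)=\tfrac{c_0}{2}+V(x(s))\,\dot\gamma^t(s)$.

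With these formulas the verification is a single differentiation in each slot, which I would lay out in one pass. Differentiating $\dot\gamma^v$ gives $\ddot\gamma^v=\tfrac{d}{ds}(V\dot\gamma^t)$, the first equation of \eqref{eqn:Kahler}; differentiating $\dot\gamma^t$ and using $V_t=0$ gives $\ddot\gamma^t=2c_0\tfrac{dV}{ds}=2c_0\big(\tfrac{dV}{ds}-V_t\dot\gamma^t\big)$, the second; and substituting $\dot\gamma^t=2(c_0V+c_1)$ into $-c_0\,\dot\gamma^t\,\nabla_{\!x}V$ yields $-2c_0(c_0V+c_1)\nabla_{\!x}V=-\nabla_{\!x}(c_0V+c_1)^2$, which equals $\ddot x$ because $x(s)$ solves \eqref{eqn:Riem2} — this is the third. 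Finally $2\dot\gamma^v-2V\dot\gamma^t=c_0$ identically, so \eqref{eqn:v_end} holds with the same $c_0$, and hence $\gamma$ is a genuine $\gR$-geodesic and not merely a solution of the reduced system. Since the integral formulas make sense on all of $I$, the lift $\gamma$ shares the domain of $x(s)$.

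I do not expect a genuine obstacle: there is no analytic subtlety, and — in contrast with Theorem \ref{thm:1d} — no reparametrization is needed, since the affine parameter $s$ of \eqref{eqn:Riem2} already serves as the geodesic's affine parameter and $x(s)$ itself is not altered. The one point demanding care is the apparently circular interdependence among $c_0$, $c_1$, $\dot t_0$ and $\dot v_0$: the correct order is to express $\dot t_0$ and $\dot v_0$ in terms of $c_0$ and $c_1$ first, define $\gamma^t$ and $\gamma^v$ by quadrature, and only then confirm that the constant reproduced by \eqref{eqn:v_end} is the $c_0$ one began with. The degenerate case $c_0=0$, for which \eqref{eqn:Riem2} reduces to $\ddot x=0$, is covered by the same formulas without change.
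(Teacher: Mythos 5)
Your proposal is correct, and it closes the argument in a direction opposite to the paper's. The paper fixes the same initial data you do ($\dot{t}_0 = 2c_0V(x_0)+2c_1$, $\dot{v}_0 = V(x_0)\dot{t}_0+\tfrac12 c_0$), runs the geodesic flow of $\gR$ forward from that data, and identifies the resulting $x^i$-components with the given solution $x(s)$ via the computation \eqref{eqn:x_end} (the identification implicitly resting on uniqueness of solutions to the initial value problem for \eqref{eqn:Riem2}). You instead build the lift explicitly: $\gamma^t$ and $\gamma^v$ are written down by quadrature from $x(s)$, and one then checks by direct differentiation that the triple satisfies \eqref{eqn:Kahler} together with $2\dot{\gamma}^v-2V\dot{\gamma}^t = c_0$ identically. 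What your route buys is closed-form expressions for the lifted components and no appeal to ODE uniqueness; what it costs is that you must use the converse half of the equivalence between the geodesic equations of $\gR$ and the reduced system \eqref{eqn:Kahler}--\eqref{eqn:v_end} (i.e., that a curve satisfying \eqref{eqn:Kahler} with the constant $c_0$ realized as $2\dot{v}-2V\dot{t}$ along the whole curve is genuinely a geodesic, not merely a projection of one). You flag exactly this point, and it is covered by the cited \cite[Proposition~1]{aazami}; note also that once $2\dot{v}-2V\dot{t}$ is constant, the $\ddot{v}$-equation of \eqref{eqn:Kahler} is automatic by differentiation, so your verification is internally consistent. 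Your handling of the order of quantifiers among $c_0,c_1,\dot{t}_0,\dot{v}_0$ and of the degenerate case $c_0=0$ is also correct.
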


\begin{proof}
Let $x(s)$ have initial data $x(s_0) \defeq x_0, \dot{x}(s_0) \!\defeq \dot{x}_0$.  Given $c_0,c_1$ in \eqref{eqn:Riem2}, choose $\dot{t}_0 = 2c_0V(x_0) +2c_1$ and $\dot{v}_0 = V(x_0)\dot{t}_0 +\frac{1}{2}c_0.$
Then for any choice of $v_0,t_0$, the geodesic $\gamma(s)$ of \eqref{eqn:Riem} starting at $\gamma(s_0)=(v_0,t_0,x_0)$ with initial velocity $\gamma'(s_0) = (\dot{t}_0,\dot{v}_0,\dot{x}_0)$ will, as we saw in \eqref{eqn:x_end}, have $x^i$-components $\gamma^i(s)$ satisfying $\ddot{\gamma}^i = -\partial_i(c_0V+c_1)^2$.  This is precisely \eqref{eqn:Riem2}.
\end{proof}

Regarding the domains of $x(s)$ and $\gamma(s)$: By \cite{gordon,ebin}, it is well known that $\ddot{x} = -\nabla_{\!x}(c_0V+c_1)^2$ always has complete solutions when $V(x)$ is globally defined on $\RR^n$.  Moreover, as shown in \cite[Proposition~1]{aazami}, \eqref{eqn:Riem} is always geodesically complete when $V(t,x)$ is globally defined (even if $V_t \neq 0$).  Let us now show that, for the following class of generalized Hamiltonian systems, the ``Riemannian Eisenhart Lift" of Theorem \ref{prop:Kahler} yields a two-point boundary result and generalizes the well known conservation of energy equation:

\begin{thm}[Generalized Hamiltonian system]
\label{thm:Kahler}
Let $V(t,x)$ be a $C^2$ function globally defined on $\RR\times \RR^n$.  For any two points $x_0,x_1 \in \RR^n$, there exist a constant $c$ and a $C^2$ function $\tau(t)$, both depending on $x_0,x_1$, such that the second-order ODE
\beqa
\label{eqn:tV}
\ddot{x} = -2\Big(c+\!\!\int_0^t(\nabla_{\!x}\widetilde{V}\cdot \dot{x})\,dt\Big)\nabla_{\!x}\widetilde{V} \comma \widetilde{V}(t,x) \defeq V(\tau(t),x),
\eeqa
has a complete solution $x(t)$ passing through $x_0$ and $x_1$, which satisfies
\beqa
\label{eqn:CoE}
\frac{1}{2}|\dot{x}(t)|^2 + \frac{1}{4}\dot{\tau}(t)^2 = \emph{\text{const.}}
\eeqa
If $V_t = 0$, then \eqref{eqn:tV} reduces to the Hamiltonian system
\beqa
\label{eqn:V^2}
\ddot{x} = -\nabla_{\!x}(V+\bar{c})^2 \comma \bar{c}\defeq c-V(x_0),
\eeqa
and \eqref{eqn:CoE} the conservation of energy equation for the potential $(V+\bar{c})^2$.
\end{thm}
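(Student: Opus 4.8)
The plan is to realize $x(t)$ as the $x$-projection of a geodesic of the complete Riemannian metric $\gR$ of \eqref{eqn:Riem} joining two carefully chosen points of $\RR^{n+2}$, and then to read off \eqref{eqn:tV} and \eqref{eqn:CoE} directly from the geodesic equations \eqref{eqn:Kahler} together with the two quantities that are constant along any $\gR$-geodesic: the constant $c_0$ of \eqref{eqn:v_end}, and the speed $\gR(\gamma',\gamma')$. The independent variable called $t$ in the statement will be the affine parameter of that geodesic, and the function $\tau(t)$ will be its $t$-coordinate.

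First I would fix the endpoints $P_0 \defeq (0,0,x_0)$ and $P_1 \defeq (1,0,x_1)$ in $\RR^{n+2}$. Since $V$ is globally defined, $\gR$ is a complete Riemannian metric by \cite[Proposition~1]{aazami}, so Hopf--Rinow produces a geodesic $\gamma(s)=(v(s),t(s),x(s))$ joining $P_0$ to $P_1$, defined for all $s\in\RR$. The crucial point---and the only genuinely delicate step---is that this $\gamma$ must have $c_0\neq 0$: if instead $c_0=0$, then the $t$-equation of \eqref{eqn:Kahler} forces $\ddot t\equiv 0$, so $t(s)$ is affine, and since $t$ equals $0$ at the two parameter values hitting $P_0$ and $P_1$ we get $t\equiv 0$ and $\dot t\equiv 0$; then $\ddot v=\tfrac{d}{ds}(V\dot t)\equiv 0$ makes $v$ affine, while $c_0=2\dot v-2V\dot t=2\dot v$ forces $\dot v\equiv 0$ and hence $v\equiv 0$, contradicting $v=1$ at $P_1$. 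Having ruled this out, an affine rescaling of the parameter lets me assume $c_0=1$.

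Next I would relabel the affine parameter of $\gamma$ as $t$ and write $\gamma(t)=(v(t),\tau(t),x(t))$, so that $\tau$ is the second coordinate of $\gamma$; it is $C^2$ because the $C^2$ metric $\gR$ has $C^1$ Christoffel symbols, and $x(t)$ is $C^2$ for the same reason. Set $\widetilde V(t,x)\defeq V(\tau(t),x)$. The $x^i$-equation of \eqref{eqn:Kahler} then reads $\ddot x=-\dot\tau\,\nabla_{\!x}\widetilde V$, and the $t$-equation of \eqref{eqn:Kahler}, using $\tfrac{dV}{ds}-V_t\dot t=\nabla_{\!x}V\cdot\dot x=\nabla_{\!x}\widetilde V\cdot\dot x$ and integrating from $0$ to $t$, gives $\dot\tau(t)=\dot\tau(0)+2\int_0^t(\nabla_{\!x}\widetilde V\cdot\dot x)\,dt$. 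Substituting the second identity into the first yields exactly \eqref{eqn:tV} with $c\defeq\tfrac12\dot\tau(0)$; the solution $x(t)$ passes through $x_0$ (at parameter $0$) and through $x_1$ (at the parameter hitting $P_1$), and it is complete because $\gamma$ is. For \eqref{eqn:CoE}, I would compute $\gR(\gamma',\gamma')$ from \eqref{eqn:Riem} after eliminating $\dot v$ via $\dot v=\tfrac12 c_0+V\dot t$; the cross terms cancel and one is left with $\gR(\gamma',\gamma')=\tfrac12 c_0^2+\tfrac12\dot\tau^2+|\dot x|^2$. Since both $\gR(\gamma',\gamma')$ and $c_0$ are constant along $\gamma$, so is $\tfrac12|\dot x|^2+\tfrac14\dot\tau^2$, which is \eqref{eqn:CoE}.

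Finally, when $V_t=0$ we have $\widetilde V=V$, and the integral $\int_0^t(\nabla_{\!x}V\cdot\dot x)\,dt$ telescopes to $V(x(t))-V(x_0)$, so $\dot\tau=2V+2\bar c$ with $\bar c\defeq c-V(x_0)$; then $\ddot x=-\dot\tau\,\nabla_{\!x}V=-2(V+\bar c)\nabla_{\!x}V=-\nabla_{\!x}(V+\bar c)^2$, which is \eqref{eqn:V^2}, and since $\tfrac14\dot\tau^2=(V+\bar c)^2$, the identity \eqref{eqn:CoE} becomes precisely conservation of energy for the potential $(V+\bar c)^2$. The main obstacle is the one isolated in the second paragraph---arranging $c_0\neq 0$ via the choice of endpoints---since without it the construction degenerates to the trivial statement about straight lines; everything else is bookkeeping with \eqref{eqn:Kahler} and the two conservation laws.
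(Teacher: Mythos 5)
Your proposal is correct and follows the same overall strategy as the paper: lift to the complete Riemannian metric $\gR$, connect two points of $\RR^{n+2}$ lying over $x_0$ and $x_1$ by a geodesic via Hopf--Rinow, rule out $c_0=0$, rescale the affine parameter so that $c_0=1$, and read off \eqref{eqn:tV} from \eqref{eqn:Kahler}. You differ in two sub-steps, both legitimately. First, to force $c_0\neq 0$ the paper joins $(0,0,x_0)$ to $(v_1,1,x_1)$ with $v_1$ chosen to violate \eqref{eqn:v_not}, i.e.\ to miss the one value of $\gamma^v(1)$ that a $c_0=0$ geodesic would be forced to take; you instead join $(0,0,x_0)$ to $(1,0,x_1)$ and observe that $c_0=0$ makes $t$ affine hence identically zero, whence $\dot v\equiv\tfrac12 c_0+V\dot t=0$ contradicts $v$ changing from $0$ to $1$. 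Your device is arguably cleaner, as it needs no genericity condition on the endpoint; the only cost is that your $\tau$ need not hit the value $1$, which is irrelevant since the theorem places no constraint on $\tau$ beyond \eqref{eqn:tV} and \eqref{eqn:CoE}. Second, for \eqref{eqn:CoE} the paper computes $\sum_i\ddot x^i\dot x^i=-\tfrac12\ddot\tau\dot\tau$ directly from the ODEs and integrates, while you invoke the constancy of $\gR(\gamma',\gamma')=\tfrac12 c_0^2+\tfrac12\dot\tau^2+|\dot x|^2$ along the geodesic after eliminating $\dot v$; the computation checks out and gives the same identity with a more geometric provenance. The remaining bookkeeping ($c=\tfrac12\dot\tau(0)$, the $V_t=0$ specialization) matches the paper.
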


\begin{proof}
As mentioned above, for any $C^2$ function $V(t,x)$ globally defined on $\RR\times \RR^n$, the corresponding Riemannian manifold $(\RR^{n+2},\gR)$ defined by \eqref{eqn:Riem} is geodesically complete. By the Hopf-Rinow Theorem, any two points in $\RR^{n+2}$ can thus be connected by a $C^2$-geodesic segment of $\gR$ (in fact a length-minimizing one with respect to the Riemannian distance function $d_{\text{$\gR$}}$ on $\RR^{n+2}$ induced by $\gR$; see, e.g., \cite[Corollary~6.21]{Lee}).  Let us take $(0,0,x_0), (v_1,1,x_1) \in  \RR^{n+2}$, where $v_1$ is any number such that
\beqa
\label{eqn:v_not}
v_1 \neq \int_0^1 V(u,(x_1-x_0)u+x_0)\,du.
\eeqa
Now let $\gamma(s) = (\gamma^v(s),\gamma^t(s),\gamma^1(s),\dots,\gamma^n(s)) \defeq (\gamma^v(s),\gamma^t(s),\gamma^x(s))$ be a geodesic of $(\RR^{n+2},\gR)$ connecting $(0,0,x_0)$ and $(v_1,1,x_1)$; by linearly rescaling the affine parameter $s$ if necessary, we may ensure that $\gamma(0) = (0,0,x_0)$ and $\gamma(1) = (v_1,1,x_1)$ (note that a linear rescaling of $s$ does not affect the status of $\gamma(s)$ as a geodesic of $\gR$).  Recalling \eqref{eqn:Kahler} and \eqref{eqn:v_end}, the geodesic $\gamma\colon \RR\lra \RR^{n+2}$, which has some initial velocity $\dot{\gamma}(0) = (\dot{\gamma}^v_0,\dot{\gamma}^t_0,\dot{\gamma}^1_0,\dots,\dot{\gamma}^n_0)$, satisfies
\beqa
\left.\begin{array}{ccc}
\ddot{\gamma}^v \!\!&=&\!\! \frac{d}{ds}(V\dot{\gamma}^t),\phantom{\Big(\Big)}\\
\ddot{\gamma}^t \!\!&=&\!\! 2c_0\Big(\frac{dV}{ds}-V_t\,\dot{\gamma}^t\Big),\phantom{\Big(\Big)}\\
\ddot{\gamma}^i \!\!&=&\!\! -c_0V_i\,\dot{\gamma}^t,\phantom{\Big(\Big)}\\
\end{array}\right\} \comma c_0 = 2\dot{\gamma}^v_0 - 2V(0,x_0)\dot{\gamma}^t_0.\label{eqn:c=0}
\eeqa
Integrating $\ddot{\gamma}^t$ yields 
$\dot{\gamma}^t = \dot{\gamma}^t_0+2c_0\int_{0}^s (\nabla_{\!x}V\cdot \dot{\gamma}^x)\,ds$, and thus
\beqa
\label{eqn:c_01}
\ddot{\gamma}^x = -c_0\Big( \dot{\gamma}^t_0+2c_0\!\!\int_{0}^s (\nabla_{\!x}V\cdot \dot{\gamma}^x)\,ds\Big) \nabla_{\!x}V(\gamma^t(s),\gamma^x(s)).
\eeqa
We now show that, by a further rescaling of $\gamma(s)$ if necessary, $c_0$ can always be chosen to equal $1$. To begin with, suppose that $\dot{\gamma}^v_0$ and $\dot{\gamma}^t_0$ of $\gamma(s)$ are such that $c_0 = 0$; then $\gamma^t(s) = s$ and
$$
\gamma^x(s) = (x_1-x_0)s+x_0 \commas \dot{\gamma}^v(s) = V(s,(x_1-x_0)s+x_0) +\underbrace{\,\dot{\gamma}^v_0- V(0,x_0)\,}_{\text{$=0$ by \eqref{eqn:c=0}}}.
$$
In particular, $v_1 = \gamma^v(1) = \int_0^1 V((x_1-x_0)u+x_0)du$, which is impossible by \eqref{eqn:v_not}.  Having thus ensured that $c_0\neq 0$, the linear rescaling $\tilde{\gamma}(s) \defeq \gamma(c_0^{-1}s)$ yields
$$
\tilde{\gamma}'(0) = c_0^{-1}\gamma'(0) \imp \tilde{c}_0 \overset{\eqref{eqn:c=0}}{=} 2\dot{\tilde{\gamma}}^v_0 - 2V(0,x_0)\dot{\tilde{\gamma}}^t_0 = 1.
$$
Thus $\tilde{\gamma}(s)$ passes through $\tilde{\gamma}(0) = (0,0,x_0)$ and $\tilde{\gamma}(c_0)=(v_1,1,x_1)$ and has $\tilde{c}_0 = 1$.  Inserting this information into \eqref{eqn:c_01} and changing notation $s \mapsto t$ yields \eqref{eqn:tV} with $c \defeq \dot{\tilde{\gamma}}^t_0/2, \tau(t) \defeq \tilde{\gamma}^t(t)$, and $x(t) \defeq \tilde{\gamma}^x(t)$. In particular, $x(t)$ passes through $x_0$ and $x_1$, and is complete because $\tilde{\gamma}(s)$ is so. We now verify the constant of the motion \eqref{eqn:CoE}. Replacing $\dot{\tilde{\gamma}}^t$ with $\dot{\tau}$ and $\dot{\tilde{\gamma}}^i$ with $\dot{x}^i$ in \eqref{eqn:c=0}, note that
$$
\sum_{i=1}^n \ddot{x}^i\dot{x}^i = -\!\sum_{i=1}^n V_i\,\dot{\tau}\,\dot{x}^i = -\dot{\tau}\Big(\frac{dV}{ds}-V_t\,\dot{\tau}\Big) = -\frac{1}{2}\ddot{\tau}\dot{\tau},
$$
from which \eqref{eqn:CoE} follows. Finally, if $V_t = 0$, then $\dot{\tau} = \dot{\tau}_0+2\big(V(x(s)) - V(x_0)\big)$, hence $\frac{1}{4}\dot{\tau}^2 = (V+\bar{c})^2$ with $\bar{c} \defeq \frac{\dot{\tau}_0}{2} -V(x_0) = c - V(x_0)$. The remainder of the proof now follows.
\end{proof}

By way of comparison, recall that the ``Bolza two-point boundary problem" for a Hamiltonian system asks whether, given two points $x_0,x_1 \in \RR^n$, there exists at least one solution of $\ddot{x} = -\nabla_{\!x}V$ connecting $x_0$ and $x_1$.  Although this is a long-standing question, it was, to the best of our knowledge, solved only recently for time-independent potentials, in \cite[Corollary~4.1]{bartolo}. Specifically, using Jacobi metrics and conservation of energy $E = \frac{1}{2}|\dot{x}(t)|^2+V(x(t))$, \cite[Corollary~4.1]{bartolo} showed, for each $x_0,x_1 \in \RR^n$, the existence of an energy $E_0>0$ such that, for all $E\geq E_0$, there is a solution of $\ddot{x} = -\nabla_{\!x}V$ with energy $E$ from $x_0$ to $x_1$.  (For results prior to this, see, e.g., \cite[Theorem~1]{ekeland} and \cite[Theorem~2]{bolle}.) As for the time-dependent case, \cite[Theorem~1.1]{Bolza_AMJ} showed that for a given $C^1$-potential $V\colon [0,\delta] \times \RR^n \lra \RR$, if there exist $\bar{x} \in \RR^n, p \in [0,2)$ and $\lambda,\mu,k \in \RR$ such that $V(t,x)$ satisfies the at-most-quadratic inequality
$$
V(t,x) \leq \lambda d_0(x,\bar{x})^2 +  \mu d_0(x,\bar{x})^p + k \comma d_0(x,\bar{x}) \defeq |x-\bar{x}|,
$$
for all $x \in \RR^n$, then there is a solution of $\ddot{x} = -\nabla_{\!x}V$ connecting any two distinct points in $\RR^n$ (in fact this result also applies to any complete Riemannian manifold $(M,g)$ with distance function $d_{\scalebox{0.6}{$g$}}$ in place of $d_0$).

\section*{Acknowledgments}
The author thanks Graham Cox for helpful discussions on \cite{forst} and Miguel S\'anchez for helpful discussions on \cite{bartolo}. Finally, the author thanks Matthias Blau for helpful discussions and warmly acknowledges the hospitality of the Albert Einstein Center at Universt\"at Bern.

\bibliographystyle{alpha}
\bibliography{Eisenhart_lift}
\end{document}